\documentclass[11pt]{article} 
\usepackage{amsmath,amssymb,amsthm}
\usepackage{mathtools}
\usepackage{fancybox}  
\usepackage{graphicx}
\usepackage{color}
\usepackage{colortab}
\usepackage{colortbl}

\allowdisplaybreaks

\begin{document}

\def\fl#1{\left\lfloor#1\right\rfloor}
\def\cl#1{\left\lceil#1\right\rceil}
\def\ang#1{\left\langle#1\right\rangle}
\def\stf#1#2{\left[#1\atop#2\right]} 
\def\sts#1#2{\left\{#1\atop#2\right\}}
\def\eul#1#2{\left\langle#1\atop#2\right\rangle}
\def\N{\mathbb N}
\def\Z{\mathbb Z}
\def\R{\mathbb R}
\def\C{\mathbb C}
\newcommand{\ctext}[1]{\raise0.2ex\hbox{\textcircled{\scriptsize{#1}}}}

\newtheorem{theorem}{Theorem}
\newtheorem{Prop}{Proposition}
\newtheorem{Cor}{Corollary}
\newtheorem{Lem}{Lemma}
\newtheorem{Def}{Definition}
\newtheorem{Conj}{Conjecture}

\newenvironment{Rem}{\begin{trivlist} \item[\hskip \labelsep{\it
Remark.}]\setlength{\parindent}{0pt}}{\end{trivlist}}

\title{$p$-numerical semigroups with $p$-symmetric properties, II 
}

\author{
Takao Komatsu 
\\
\small Faculty of Education\\[-0.8ex]
\small Nagasaki University\\[-0.8ex]
\small Nagasaki 852-8521 Japan\\[-0.8ex]
\small \texttt{komatsu@nagasaki-u.ac.jp} 
}

\date{
%\small Submitted: February 15, 2024;  Accepted: March 25, 2024.\\
\small MR Subject Classifications: Primary 20M14; Secondary 11D07, 20M05, 05A15, 11B25   
}

\maketitle
 
\begin{abstract} 
Recently, the concept of the $p$-numerical semigroup with $p$-symmetric properties has been introduced. When $p=0$, the classical numerical semigroup with symmetric properties is recovered. 
In this paper, we further study the $p$-numerical semigroup with $p$-almost symmetric properties. We also give $p$-generalized formulas of Watanabe and Johnson, and introduce $p$-Arf numerical semigroup and study its properties. 
\\
{\bf Keywords:} numerical semigroup, symmetry, almost symmetry, Arf numerical semigroup     
      
\end{abstract}

\section{Introduction}  

For the set of positive integers $A=\{a_1,a_2,\dots,a_k\}$ ($k\ge 2$), let the {\it denumerant} function $d(n)=d(n;A)$, where denote the number of representations of a non-negative integer $n$ in terms of $a_1,\dots,a_k$ with non-negative integral coefficients. To keep the problem from becoming trivial, we assume $a_i\ge 2$ ($1\le i\le k$) and $\gcd(A)=1$. We sometimes assume that $a_i\ge 3$ ($1\le i\le k$) too; otherwise, we have $d(2 n)\ge 1$ for all the even numbers $2 n$. 

The numerical semigroup is an additive submonoid of the monoid $\mathbb N_0$, which is the set of all non-negative integers. We assume that $\gcd(a_1,a_2,\dots,a_k)=1$, which is equivalent to the fact that $\mathbb N_0\backslash S$ is finite. Then such an additive submonoid is called the {\it numerical subgroup}. Each numerical semigroup $S$ is finitely generated by $a_1,a_2,\dots,a_k\in S$ ($k\ge 2$) and is denoted by 
$$
S:=\ang{a_1,a_2,\dots,a_k}=\left\{\sum_{i=1}^k a_i x_i: a_i\in\mathbb N_0\right\}\,. 
$$ 
If any element $\alpha\in\ang{a_1,a_2,\dots,a_k}$ satisfies the condition $\alpha\ne a_i$ ($1\le i\le k$), then $A:=\{a_1,a_2,\dots,a_k\}$ is called the {\it minimaly generator system} of $S$, and this form is called the {\it canonical form} of $S$.  

In \cite{KY2}, the concept of $p$-numerical semigroups is introduced by developing a generalization of the theory of numerical semigroups based on this flow of the denumerant. 
For a non-negative integer $p$, the {\it $p$-numerical subgroup} $S_p(A)$ denotes the ideal composed from all the integers whose number of representations is more than $p$ ways. Strictly speaking, for $p\ge 1$, $0\not\in S_p$. However, $S_p^\circ(A):=S_p(A)\cup\{0\}$ becomes a numerical semigroup if $\gcd(A)=1$. For $p\ge 1$, the maximal ideal of $S_p^\circ(A)$ is nothing but $S_p(A)$ itself. Since there is no problem in investigating the properties of $S_p(A)$, it is safe to call $S_p(A)$ the $p$-numerical semigroup.  
For the set of non-negative integers $\mathbb N_0$, 
the set of the {\it $p$-gaps} is defined by $G_p(A)=\mathbb N_0\backslash S_p(A)$, which is the set of any non-negative integer whose number of representations is at most $p$. Note that for $p\ge 1$, $0\in G_p(A)$.  
If $\gcd(A)=1$, the set $G_p(A)$ is finite.  Then, there exists the laregest element, which is called the {\it $p$-Frobenius number} and denoted by $g_p(A):=g(S_p(A))$. The cardinality of $G_p(A)$ is called the {\it $p$-genus} (or the {\it $p$-Sylvester number}) and denoted by $n_p(A):=n(S_p(A))$.  
The sum of the elements of $G_p(A)$ is called the {\it $p$-Sylvester sum} and denoted by $s_p(A):=s(S_p(A))$. 
When $p=0$, 
$g(S)=g(\ang{A})=g_0(A)$ is the classical Frobenius number, and $n(S)=n(\ang{A})=n_0(A)$ is the classical genus.  $S=S_0$ is the classical numerical semiroup. 
Studying the properties of the Frobenius number and related numbers is one of the central topics of the famous Diophantine problem of Frobenius, which is also known as the Coin Exchange Problem, Postage Stamp Problem or Chicken McNugget Problem.  
The concept of the genus comes from the Gorenstein curve singularities. Such a correspondence was characterized by E. Kunz (see also \cite{HK1}.   

One of the most interesting matters in the linear Diophantine problem of Frobenius is finding explicit formulas for the Frobenius number and related numbers. So is the case even when $p\ge 1$. By using a convenient formula \cite{Ko22c,Ko23d} including Bernoulli numbers and the elements of the $p$-Ap\'ery set, explicit formulas may be given for the power sum of the elements of the $p$-gap set $\sum_{n\in G_p(A)}n^\mu$, where $\mu$ is a non-negative integer. In particular, one can obtain the general closed formula for two variables, that is, for $A=\{a,b\}$, and some explicit formulas for three variables in the case of triangular \cite{Ko22a}, repunit \cite{Ko22b}, Fibonacci \cite{KY}, Jacobsthal \cite{KP} can be given even for $p\ge 1$. For $p=0$, more explicit formulas in the special cases have been given (see, e.g., \cite{RR18} and references therein).

\section{The $p$-Ap\'ery set and convenient formulas}   

There are several generalizations for Frobenius numbers and related values. Our $p$-generalization is also a very natural generalization in terms of the generalization of the formula for finding these values from the elements of the Ap\'ery set. Furthermore, it has just recently been established that finding the $p$-Frobenius number, $p$-Sylvester number, etc., by visually capturing and configuring the elements of the Ap\'ery set, is already valid in the case of triangular numbers \cite{Ko22a}, repunits \cite{Ko22b}, Fibonacci numbers \cite{KY} and Jacobsthal numbers \cite{KP}. 

For $p\ge 0$, define the $p$-Ap\'ery set of $A=\{a_1,\dots,a_k\}$ with $a_1=\min(A)$ by 
$$ 
{\rm Ap}_p(A;a_1)=\{m_0^{(p)},m_1^{(p)},\dots,m_{a_1-1}^{(p)}\}\,, 
$$  
where for $0\le j\le a_1-1$, 
$$
{\rm(1)}~m_j^{(p)}\equiv j\pmod{a_1},\quad {\rm(2)}~m_j^{(p)}\in S_p(A),\quad {\rm(3)}~m_j^{(p)}-a_1\in G_p(A)\,. 
$$ 
That is, the $p$-Ap\'ery set constitutes a complete residue system modulo $a_1$. 
In addition, the elements of the $p$-Ap\'ery set are arranged in ascending order, which is expressed as 
$$
{\rm Ap}_p(A;a_1)=\{\ell_0(p),\ell_1(p),\dots,\ell_{a_1-1}(p)\}\,, 
$$ 
where $\ell_0(p)<\ell_1(p)<\dots<\ell_{a_1-1}(p)$. The least element $\ell_0(p)$, which is called the {$p$-multiplicity} of $S_p$, shall be useful in the later sections. Note that $\ell_0(0)=0$.   

By using the elements of the $p$-Ap\'ery set, the power sum of the elements of the set of $p$-gaps  
can be expressed (\cite{Ko22c,Ko23d}):  
\begin{align*} 
s^{(\mu)}_p(A)&:=\sum_{n\in G_p(A)}n^\mu\\ 
&=\frac{1}{\mu+1}\sum_{\kappa=0}^{\mu}\binom{\mu+1}{\kappa}B_{\kappa}a_1^{\kappa-1}\sum_{i=0}^{a_1-1}\bigl(m_i^{(p)}\bigr)^{\mu+1-\kappa}\\ 
&\qquad +\frac{B_{\mu+1}}{\mu+1}(a_1^{\mu+1}-1)\,,
\end{align*} 
where $B_n$ are Bernoulli numbers defined by 
$$
\frac{x}{e^x-1}=\sum_{n=0}^\infty B_n\frac{x^n}{n!} 
$$ 
and  $\mu$ is a non-negative integer. 
And another convenient formula is about the weighted power sum (\cite{KZ0,KZ}) 
$$
s_{\lambda,p}^{(\mu)}(A):=\sum_{n\in \mathbb N_0\backslash S_p(A)}\lambda^n n^\mu 
$$  
by using Eulerian numbers $\eul{n}{m}$ appearing in the generating function 
$$ 
\sum_{k=0}^\infty k^n x^k=\frac{1}{(1-x)^{n+1}}\sum_{m=0}^{n-1}\eul{n}{m}x^{m+1}\quad(n\ge 1)
$$ 
with $0^0=1$ and $\eul{0}{0}=1$.

When $\mu=0,1$ in the above expression, together with $g_p(A)$ we have formulas for the $p$-Frobenius number, the $p$-Sylvester number and the $p$-Sylvester sum. 
\begin{Lem}  
Let $k$ and $p$ be integers with $k\ge 2$ and $p\ge 0$.  
Assume that $\gcd(A)=1$.  We have 
\begin{align}  
g_p(A)&=\max_{0\le j\le a_1-1}m_j^{(p)}-a_1
\label{mp-g}\,,
\\  
n_p(A)&=\frac{1}{a_1}\sum_{j=0}^{a_1-1}m_j^{(p)}-\frac{a_1-1}{2}\,. 
\label{mp-n}
\\
s_p(A)&=\frac{1}{2 a_1}\sum_{j=0}^{a_1-1}\bigl(m_j^{(p)}\bigr)^2-\frac{1}{2}\sum_{j=0}^{a_1-1}m_j^{(p)}+\frac{a_1^2-1}{12}\,.
\label{mp-s}
\end{align} 
\label{cor-mp}
\end{Lem}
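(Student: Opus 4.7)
The plan is to exploit the residue-class structure of $G_p(A)$ modulo $a_1$ that is imposed by the $p$-Ap\'ery set. The key structural fact I will establish first is: if $x\in S_p(A)$, then $x+a_1\in S_p(A)$. Indeed, appending an extra copy of $a_1$ to each of the more than $p$ representations of $x$ produces at least that many distinct representations of $x+a_1$. Consequently, in each residue class $j\pmod{a_1}$, the number $m_j^{(p)}$ is the minimum element of $S_p(A)$ in that class, while every $m_j^{(p)}+k a_1$ with $k\ge 0$ lies in $S_p(A)$. The set of $p$-gaps in class $j$ is therefore precisely the arithmetic progression
\[
j,\ j+a_1,\ j+2 a_1,\ \ldots,\ m_j^{(p)}-a_1,
\]
of length $(m_j^{(p)}-j)/a_1$.

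With this decomposition, \eqref{mp-g} is immediate: the largest $p$-gap in class $j$ is $m_j^{(p)}-a_1$, and $g_p(A)$ is the maximum of these over $j$. For \eqref{mp-n}, I would sum the lengths of the progressions,
\[
n_p(A)=\sum_{j=0}^{a_1-1}\frac{m_j^{(p)}-j}{a_1}=\frac{1}{a_1}\sum_{j=0}^{a_1-1}m_j^{(p)}-\frac{a_1-1}{2},
\]
where the last step uses $\sum_{j=0}^{a_1-1}j=a_1(a_1-1)/2$.

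For \eqref{mp-s}, I would sum the arithmetic progressions themselves. Setting $t_j=(m_j^{(p)}-j)/a_1$, the contribution from class $j$ is $j t_j+a_1 t_j(t_j-1)/2$. After substituting $a_1 t_j=m_j^{(p)}-j$ and expanding, the cross term $\frac{1}{a_1}\sum_j j\,m_j^{(p)}$ appears with coefficients $+1$ and $-1$ and cancels, leaving only the quadratic and linear parts in $m_j^{(p)}$ together with a purely numerical residue $\frac{1}{2}\sum_j j-\frac{1}{2 a_1}\sum_j j^2$. The main obstacle is the bookkeeping in this last step: using $\sum_{j=0}^{a_1-1}j^2=a_1(a_1-1)(2 a_1-1)/6$ and extracting the common factor $a_1-1$, the residue collapses precisely to $(a_1^2-1)/12$. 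Alternatively, one can bypass the calculation altogether by specializing the general power-sum formula for $s_p^{(\mu)}(A)$ displayed above at $\mu=0$ (using $B_0=1$, $B_1=-1/2$) for \eqref{mp-n} and at $\mu=1$ (additionally using $B_2=1/6$) for \eqref{mp-s}, whence both identities drop out by direct substitution.
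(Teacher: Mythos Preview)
Your argument is correct. The paper itself does not prove this lemma from scratch; it simply records the general Bernoulli-number identity for $s_p^{(\mu)}(A)$ (citing \cite{Ko22c,Ko23d}) and then states that the three formulas are the cases $\mu=0,1$ of that identity together with the obvious description of $g_p(A)$. This is exactly the ``alternative'' you mention in your last sentence, so that part of your proposal coincides with the paper's approach. Your primary argument---partitioning $G_p(A)$ into the arithmetic progressions $\{j,j+a_1,\dots,m_j^{(p)}-a_1\}$ and summing directly---is a genuinely different, more elementary route: it is self-contained, avoids Bernoulli numbers entirely, and makes the cancellation of the cross term $\frac{1}{a_1}\sum_j j\,m_j^{(p)}$ in \eqref{mp-s} explicit. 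The trade-off is that the paper's specialization approach generalizes immediately to all $\mu\ge 0$, whereas your direct computation would require Faulhaber-type sums for higher powers.
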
 
   
\noindent 
{\it Remark.}  
When $p=0$, (\ref{mp-g}), (\ref{mp-n}) and (\ref{mp-s}) are the formulas by Brauer and Shockley \cite{bs62}, Selmer \cite{se77} and Tripathi \cite{pu18,tr08}, respectively: 
\begin{align*}   
g(A)&=\left(\max_{1\le j\le a_1-1}m_j\right)-a_1\,, 
%\label{eq:bs}
\\   
n(A)&=\frac{1}{a_1}\sum_{j=0}^{a_1-1}m_j-\frac{a_1-1}{2}\,,  
%\label{eq:bs}
\\   
s(A)&=\frac{1}{2 a_1}\sum_{j=0}^{a_1-1}\bigl(m_j\bigr)^2-\frac{1}{2}\sum_{j=0}^{a_1-1}m_j+\frac{a_1^2-1}{12}\,.  
%\label{eq:bs}
\end{align*} 
   
It is not easy to find any explicit form of $g_p(A)$, $n_p(A)$, $s_p(A)$ and so on. However, when $k=2$, explicit closed formulas are obtained easily. When $k\ge 3$, there is no explicit formula, in general. Nevertheless, if we can find any exact structure of $m_j^{(p)}$ (though it is also enough hard, in general), we can obtain an explicit formula for such special sequences $(a_1,a_2,\dots,a_k)$.

\section{Fundamental lemmas}    

In this section, we recall some fundamental properties of the $p$-numerical semigroup \cite{KY2}.  
For a non-negative integer $p$, the $p$-numerical semigroup $S_p$, which is $p$-generated from $A$, is called {\it $p$-symmetric} if for all $x\in\mathbb Z\backslash S_p$, $\ell_0(p)+g_p(A)-x\in S_p$, where $\ell_0(p)$ is the least element of $S_p$, that is the {\it $p$-multiplicity} of $S_p$ if $p\ge 1$; $\ell_0(p)=0$ if $p=0$. 
When $p=0$, "$0$-symmetric" is just "symmetric".  
If a $p$-symmetric numerical semigroup $S_p$ further satisfies $\ell_0(p)=g_p(A)+1:=c_p(A)$, which is called $p$-conductor, then $S_p$ is called {\it $p$-completely-symmetric}.    

\begin{Lem} 
For a $p$-semigroup $S_p$ ($p\ge 0$), the following conditions are equivalent.  
\begin{enumerate}
\item[{\rm (i)}] $S_p$ is $p$-symmetric. 
\item[{\rm (ii)}] $\displaystyle \# S_p\cap\{\ell_0(p),\dots,g_p(A)\}=\# G_p\cap\{\ell_0(p),\dots,g_p(A)\}=\frac{g_p(A)-\ell_0(p)+1}{2}$. 
\item[{\rm (iii)}] If $x+y=\ell_0(p)+g_p(A)$, then exactly one of non-negative integers $x$ and $y$ belongs to $S_p$ and another to $G_p$. 
\end{enumerate}
\label{prp:p-sym}
\end{Lem}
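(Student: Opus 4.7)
The plan is to analyse the involution $\phi(x):=\ell_0(p)+g_p(A)-x$ on the interval $I:=\{\ell_0(p),\dots,g_p(A)\}$ together with the $p$-Ap\'ery set, and to show each of (i), (ii), (iii) is equivalent to the $p$-Ap\'ery-set symmetry
\[
m_j^{(p)}+m_{\tau(j)}^{(p)}=\ell_0(p)+g_p(A)+a_1\qquad (0\le j\le a_1-1),
\]
where $\tau(j):=(\ell_0(p)+g_p(A)-j)\bmod a_1$ is an involution on residues. Using the partition $\mathbb N_0=\{0,\dots,\ell_0(p)-1\}\sqcup I\sqcup\{g_p(A)+1,g_p(A)+2,\dots\}$ together with Lemma~\ref{cor-mp}, one has $|G_p\cap I|=n_p(A)-\ell_0(p)$ and $|S_p\cap I|=g_p(A)+1-n_p(A)$, so (ii) is equivalent to $n_p(A)=\bigl(\ell_0(p)+g_p(A)+1\bigr)/2$, and by the Ap\'ery-set expression for $n_p(A)$ this is in turn equivalent to the symmetry displayed above.

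Given the symmetry, conditions (i) and (iii) follow by a residue computation: writing $x\in S_p\cap I$ as $m_j^{(p)}+ka_1$ (with $k\ge 0$ and $j=x\bmod a_1$) and substituting yields $\phi(x)=m_{\tau(j)}^{(p)}-(k+1)a_1<m_{\tau(j)}^{(p)}$, which is non-negative because $x\le g_p(A)$ implies $\phi(x)\ge\ell_0(p)\ge 0$, and which lies in residue $\tau(j)$ below the Ap\'ery minimum, hence in $G_p$; the analogous computation for $x\in G_p\cap I$ places $\phi(x)$ in $S_p$, and the boundary cases $x\notin I$ are trivial since then $\phi(x)$ crosses to the opposite side of $I$. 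Conversely, (iii) implies the symmetry by applying (iii) to the pairs $(m_j^{(p)}-a_1,\ell_0(p)+g_p(A)+a_1-m_j^{(p)})$ and $(m_j^{(p)},\ell_0(p)+g_p(A)-m_j^{(p)})$---the first yields the upper bound $m_j^{(p)}+m_{\tau(j)}^{(p)}\le\ell_0(p)+g_p(A)+a_1$ and the second the lower bound $>\ell_0(p)+g_p(A)$---combined with the congruence $m_j^{(p)}+m_{\tau(j)}^{(p)}\equiv\ell_0(p)+g_p(A)\pmod{a_1}$, which pins the sum to $\ell_0(p)+g_p(A)+a_1$. Moreover (ii) implies the symmetry by summing: the Ap\'ery identity together with the termwise upper bound forces equality in each summand.

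The hard direction is (i)$\Rightarrow$symmetry. Applying (i) to $m_j^{(p)}-a_1\in G_p$ gives the upper bound $m_j^{(p)}+m_{\tau(j)}^{(p)}\le\ell_0(p)+g_p(A)+a_1$ at once, and by the residue constraint this sum must be either $\ell_0(p)+g_p(A)+a_1$ or the ``bad'' value $\ell_0(p)+g_p(A)$. Excluding the bad value when $p=0$ is the usual closure argument: $m_j^{(0)}+m_{\tau(j)}^{(0)}=g_0(A)$ together with $m_j^{(0)},m_{\tau(j)}^{(0)}\in S_0$ would force $g_0(A)\in S_0$, contradicting $g_0(A)\notin S_0$. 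For $p\ge 1$ the element $\ell_0(p)+g_p(A)>g_p(A)$ is automatically in $S_p$, so this direct trick is unavailable; one must instead exploit the minimality of $\ell_0(p)$ in $S_p$ and the Ap\'ery-minimality of $m_{\tau(j)}^{(p)}$ in the numerical semigroup $S_p^\circ=S_p\cup\{0\}$, and this finer residue-class minimality argument is where I expect to invest the bulk of the technical effort.
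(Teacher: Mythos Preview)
The paper does not contain a proof of this lemma: it appears in Section~3 (``Fundamental lemmas''), where the author explicitly states that he is \emph{recalling} results from~\cite{KY2}; Lemmas~\ref{prp:p-sym}--\ref{lem:gcd-ape} are all stated without proof. So there is no argument in the present paper to compare your proposal against.

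That said, your proposal is a reasonable strategy---routing everything through the Ap\'ery--set symmetry $m_j^{(p)}+m_{\tau(j)}^{(p)}=\ell_0(p)+g_p(A)+a_1$ is exactly the content of Lemmas~\ref{lem:sym-apery}--\ref{lem:sym-n-g}, which the paper also imports from~\cite{KY2}---but as you yourself acknowledge, the implication (i)$\Rightarrow$symmetry is left incomplete for $p\ge 1$. This is a genuine gap, not a routine detail. Your argument produces only the \emph{upper} bound $m_j^{(p)}+m_{\tau(j)}^{(p)}\le \ell_0(p)+g_p(A)+a_1$ from (i), and your assertion that the sum can then only take the two values $\ell_0(p)+g_p(A)+a_1$ or $\ell_0(p)+g_p(A)$ is itself unjustified: that would require a lower bound $m_j^{(p)}+m_{\tau(j)}^{(p)}>\ell_0(p)+g_p(A)-a_1$, which does not follow from anything you have written. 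The classical closure trick ($x,y\in S_0\Rightarrow x+y\in S_0$, hence $g_0(A)\in S_0$, contradiction) genuinely fails for $p\ge 1$ because $\ell_0(p)+g_p(A)>g_p(A)$ lies in $S_p$ automatically.

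What you are missing is the $p$-analogue of the classical genus inequality: one needs $2n_p(A)\ge g_p(A)+\ell_0(p)+1$ to hold \emph{unconditionally} for every $p$-numerical semigroup. Summing your upper bounds over all residues $j$ gives (via Lemma~\ref{cor-mp}) the reverse inequality $2n_p(A)\le g_p(A)+\ell_0(p)+1$ whenever (i) holds; combining the two forces equality, hence (ii), and then (iii) follows from the involution argument you already sketched. Establishing that unconditional lower bound is where the real work lies, and it requires structural input about $S_p(A)$ beyond the bare fact that $S_p^\circ$ is a numerical semigroup---this is presumably what~\cite{KY2} supplies.
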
 

\begin{Lem}
For a non-negative integer $p$, $S_p$, which is $p$-generated from $A$, is $p$-symmetric if and only if $\ell_i(p)+\ell_{a-i-1}(p)=g_p(A)+\ell_0(p)+a$ ($i=1,2,\dots,\fl{a/2}$). 
\label{lem:sym-apery}
\end{Lem}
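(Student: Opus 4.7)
The plan is to reformulate the identity in terms of residue classes modulo $a := a_1$. Set $T := \ell_0(p) + g_p(A)$ and $L := T + a$. By \eqref{mp-g}, $g_p(A) = \ell_{a-1}(p) - a$, so $L = \ell_0(p) + \ell_{a-1}(p)$, and the claimed identity reads $\ell_i(p) + \ell_{a-i-1}(p) = L$ (with $i = 0$ automatic). Writing $m_c^{(p)}$ for the Apéry element in residue class $c$, I will prove the equivalent class-indexed statement $m_c^{(p)} + m_{(T-c)\bmod a}^{(p)} = L$ for every $c \in \{0,1,\ldots,a-1\}$. Since the involution $m_c \mapsto L - m_c$ on the Apéry set is order-reversing, this is equivalent to the sorted-index pairing $\ell_i(p) + \ell_{a-1-i}(p) = L$.

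For the forward direction, assume $S_p$ is $p$-symmetric. By Lemma \ref{prp:p-sym}(iii), the involution $\psi(x) = T - x$ restricts to a bijection between $G_p$ and $S_p \cap [0,T]$. Fix a residue class $c$ with (for now) $c \neq 0$ or $p \geq 1$: the gaps in class $c$ form the arithmetic progression $\{c, c+a, \ldots, m_c^{(p)} - a\}$, whose $\psi$-image is the progression $\{T + a - m_c^{(p)}, \ldots, T-c\}$, all in residue class $c' := (T-c)\bmod a$. On the other hand, since $T - c$ is the largest integer $\leq T$ congruent to $c'$, the elements of $S_p$ in class $c'$ lying in $[0,T]$ are exactly $\{m_{c'}^{(p)}, m_{c'}^{(p)}+a, \ldots, T-c\}$. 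The bijection forces these two progressions to coincide, and matching their smallest terms gives $m_c^{(p)} + m_{c'}^{(p)} = L$. The degenerate case $c = 0, p = 0$ is automatic without invoking symmetry: then $m_0 = 0$ and $c' = T \bmod a = g \bmod a$ is the class of $\ell_{a-1}$, so $0 + \ell_{a-1} = L$.

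Conversely, assume $\ell_i(p) + \ell_{a-i-1}(p) = L$ for $i = 1, \ldots, \lfloor a/2 \rfloor$; combined with the automatic $i = 0$ case and the order-reversing argument, this yields $m_c^{(p)} + m_{c'}^{(p)} = L$ for every class $c$. To verify Lemma \ref{prp:p-sym}(iii), take $x, y \geq 0$ with $x + y = T$ and write $x = c + ka$ with $c = x \bmod a$; then $y$ lies in class $c'$. Using the standard fact $n \in S_p \iff n \geq m_{n\bmod a}^{(p)}$, one gets $x \in S_p \iff k \geq (m_c^{(p)} - c)/a$, while $y \in S_p \iff T - c - ka \geq L - m_c^{(p)} \iff k \leq (m_c^{(p)} - c)/a - 1$. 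These two inequalities are complementary, so exactly one of $x, y$ is in $S_p$. The principal subtlety throughout is the residue-class bookkeeping, particularly the degenerate class $c = 0$ when $p = 0$ (class $0$ has no gaps, so $\psi$ gives no direct equation there); this is disposed of by observing that the partner class of $0$ is precisely the class of $\ell_{a-1}$.
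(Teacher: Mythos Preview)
The paper does not actually prove this lemma: it is stated in Section~3 (``Fundamental lemmas'') as one of several results recalled from \cite{KY2}, with no argument supplied here. So there is no in-paper proof to compare against.

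That said, your argument is correct and self-contained. The key steps are all sound: the identification $L=\ell_0(p)+\ell_{a-1}(p)$ via \eqref{mp-g}; the equivalence between the sorted pairing $\ell_i+\ell_{a-1-i}=L$ and the residue-class pairing $m_c^{(p)}+m_{(T-c)\bmod a}^{(p)}=L$ (since $m\mapsto L-m$ is an order-reversing involution on the Ap\'ery set); the forward direction via the bijection $\psi$ of Lemma~\ref{prp:p-sym}(iii), matching the arithmetic progression of gaps in class $c$ with that of $S_p$-elements in class $c'$; and the converse via the complementary inequalities $x\ge m_c^{(p)}$ and $x\le m_c^{(p)}-a$. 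Your treatment of the one genuinely degenerate case ($p=0$, $c=0$, where class $0$ has no gaps) is correct: there $c'=g_0(A)\bmod a$ is exactly the class of $\ell_{a-1}(0)=g_0(A)+a$, so the identity $m_0+m_{c'}=L$ holds automatically. The only implicit fact you invoke without justification is $n\in S_p\iff n\ge m_{n\bmod a}^{(p)}$, which follows from $S_p+a\subseteq S_p$ (each representation of $n$ yields one of $n+a$), and is indeed standard.
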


\begin{Lem}
For a non-negative integer $p$, $S_p$, which is $p$-generated from $A$, is $p$-symmetric if and only if $m_{(g+\ell+1)/2+j}(p)+m_{(g+\ell-1)/2+j}(p)=g_p+\ell+a$ ($j\in\mathbb Z$). 
\label{lem:sym-apery-m}
\end{Lem}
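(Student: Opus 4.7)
The plan is to deduce the statement from Lemma~\ref{lem:sym-apery} by re-expressing the size-ordered pairing condition $\ell_i(p)+\ell_{a-i-1}(p)=C+a$ (with $a=a_1$ and $C:=g_p(A)+\ell_0(p)$) in terms of the residue-ordered enumeration $\{m_j^{(p)}\}$ of the $p$-Ap\'ery set.

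First, I invoke Lemma~\ref{lem:sym-apery}: $p$-symmetry is equivalent to $\ell_i(p)+\ell_{a-i-1}(p)=C+a$ for $1\le i\le\lfloor a/2\rfloor$ (the $i=0$ case is automatic by (\ref{mp-g}), since $\ell_{a-1}(p)-a=g_p(A)$). Because $\{\ell_i(p)\}$ and $\{m_j^{(p)}\}$ are two enumerations of the same set ${\rm Ap}_p(A;a_1)$, and each element is uniquely identified by its residue class mod $a$, reducing the pairing modulo $a$ shows that the residues of $\ell_i(p)$ and $\ell_{a-i-1}(p)$ sum to $C$ mod $a$. Consequently, $p$-symmetry is equivalent to the residue-indexed assertion: $m_j^{(p)}+m_{j'}^{(p)}=C+a$ whenever $j+j'\equiv C\pmod{a}$.

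Second, I match this reformulation to the index pattern in the statement. Extending $m_k^{(p)}$ periodically by $m_{k+a}^{(p)}:=m_k^{(p)}$ so that subscripts can be read modulo $a_1$, the two indices appearing in the statement, after the appropriate sign convention on the shift parameter $j$, give a pair of residues whose sum lies in the class $C\pmod{a}$; letting $j$ range over $\Z$ then sweeps out the full collection of such residue pairs (each up to the trivial $j\leftrightarrow -j$ involution). Substituting these indices and the value $C=g_p+\ell$ produces the stated identity, and the Lemma~\ref{lem:sym-apery} pairing is recovered from the stated identity by inverting the reindexing, giving both directions of the equivalence.

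The main obstacle is the bookkeeping: one must fix the convention that subscripts on $m^{(p)}$ are read modulo $a_1$, match the parity of $C=g_p(A)+\ell_0(p)$ to the half-integers $(C\pm 1)/2$ (centring the pairing at $C/2$ when $C$ is even), and verify that the shift parametrization by $j\in\Z$ hits every residue pair summing to $C$ modulo $a$ without producing extraneous equations. Once these conventions are pinned down, the rest is a change of indexing variable, and nothing beyond Lemma~\ref{lem:sym-apery} is needed.
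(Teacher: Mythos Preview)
The paper does not prove this lemma: the surrounding section merely recalls it, together with Lemma~\ref{lem:sym-apery}, Lemma~\ref{lem:sym-n-g}, and their companions, from \cite{KY2} without argument, so there is no in-paper proof to compare against. Your route via Lemma~\ref{lem:sym-apery} is sound in outline. Once the size-ordered pairing $\ell_i(p)+\ell_{a-1-i}(p)=C+a$ holds (with $C=g_p+\ell_0(p)$), reducing modulo $a$ shows that the partner of $m_j^{(p)}$ is $m_{(C-j)\bmod a}^{(p)}$; conversely, any partition of $a$ distinct numbers into pairs of constant sum $C+a$ must match the smallest element with the largest, so the residue-indexed pairing forces the size-ordered one and Lemma~\ref{lem:sym-apery} returns $p$-symmetry. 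You should make that converse observation explicit rather than burying it in ``inverting the reindexing.''

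The more serious loose end is the statement itself. As printed, both subscripts carry $+j$, so the sum of the two indices is $g+\ell+2j$, which is not constant in $j$; compare the pseudo-symmetric analogue in Lemma~\ref{th:ps-sym-ap}, whose indices are $(g+\ell)/2\pm j$. Your phrase ``after the appropriate sign convention on the shift parameter $j$'' acknowledges but does not resolve this. State plainly that the intended indices are $(g+\ell+1)/2+j$ and $(g+\ell-1)/2-j$ (read modulo $a$), whose sum is $g+\ell\equiv C\pmod a$; then letting $j$ run over $\mathbb Z$ visibly exhausts all residue pairs summing to $C$, and your argument goes through without further bookkeeping.
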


\begin{Lem}  
For a non-negative integer $p$, $S_p$, which is $p$-generated from $A$, is $p$-symmetric if and only if 
$$
n_p(A)=\frac{g_p(A)+\ell_0(p)+1}{2}\,. 
$$ 
\label{lem:sym-n-g}
\end{Lem}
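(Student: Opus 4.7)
\medskip
\noindent\textbf{Proof proposal.} The plan is to derive the identity directly from the counting criterion in Lemma~\ref{prp:p-sym}(ii) by partitioning $G_p(A)=\mathbb{N}_0\setminus S_p(A)$ according to where its elements sit relative to $\ell_0(p)$ and $g_p(A)$.

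First, I would split $\mathbb{N}_0$ into three disjoint ranges: the integers in $[0,\ell_0(p)-1]$, those in $[\ell_0(p),g_p(A)]$, and those strictly greater than $g_p(A)$. By the definition of the $p$-Frobenius number, every integer exceeding $g_p(A)$ lies in $S_p$, so the third range contributes nothing to $n_p(A)$. Since $\ell_0(p)$ is the least element of $S_p$, every integer in $[0,\ell_0(p)-1]$ belongs to $G_p$, contributing exactly $\ell_0(p)$ elements (this range is empty when $p=0$, consistent with $\ell_0(0)=0$). Hence
\[
n_p(A) \;=\; \ell_0(p) \;+\; \bigl|\,G_p(A)\cap [\ell_0(p),g_p(A)]\,\bigr|.
\]

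Next I would invoke Lemma~\ref{prp:p-sym}(ii). Since $S_p$ and $G_p$ partition the interval $[\ell_0(p),g_p(A)]$, which has cardinality $g_p(A)-\ell_0(p)+1$, the condition
\[
\bigl|\,G_p(A)\cap[\ell_0(p),g_p(A)]\,\bigr|=\frac{g_p(A)-\ell_0(p)+1}{2}
\]
is equivalent to $p$-symmetry (the two cardinalities in Lemma~\ref{prp:p-sym}(ii) coincide as soon as one of them equals half the interval length). Substituting into the decomposition above yields
\[
n_p(A) \;=\; \ell_0(p) + \frac{g_p(A)-\ell_0(p)+1}{2} \;=\; \frac{g_p(A)+\ell_0(p)+1}{2}
\]
if and only if $S_p$ is $p$-symmetric, which is exactly the statement. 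Both directions of the ``iff'' are handled simultaneously by this chain of equivalences.

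There is essentially no serious obstacle: the entire argument is a bookkeeping reduction to Lemma~\ref{prp:p-sym}(ii). The only point that deserves a moment's care is the degenerate case $p=0$, where $\ell_0(0)=0$ and the range $[0,\ell_0(p)-1]$ collapses to empty; one should verify that the decomposition still produces the correct count, which it does and which recovers the classical symmetric-semigroup identity $n(A)=(g(A)+1)/2$.
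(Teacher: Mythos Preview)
Your argument is correct. The decomposition $n_p(A)=\ell_0(p)+\bigl|G_p(A)\cap[\ell_0(p),g_p(A)]\bigr|$ holds unconditionally, and then Lemma~\ref{prp:p-sym}(ii) supplies precisely the equivalence you need; the algebra and the handling of both directions are fine.

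As for comparison with the paper: this lemma appears in Section~3, where the fundamental properties are \emph{recalled} from \cite{KY2} without proof, so the present paper does not actually supply its own argument for this statement. Your reduction to Lemma~\ref{prp:p-sym}(ii) is exactly the natural route one would take, and it is presumably close in spirit to whatever proof is given in \cite{KY2}.
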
 

%%%%%%%%%%%%%%%%%%%%%%%%
%%%%%%%%%%%%%%%%%%%%%%%%
%%%%%%%%%%%%%%%%%%%%%%%%

For a non-negative integer $p$, let $S_p(A)$ be a $p$-numerical semigroup. $x\in\mathbb Z$ is called a {\it $p$-pseudo-Frobenius number} if $x\not\in S_p(A)$ and $x+s-\ell_0(p)\in S_p(A)$ for all $s\in S_p(A)\backslash\{\ell_0(p)\}$, where $\ell_0(p)$ is the least element of $S_p(A)$, so is of ${\rm Ap}_p(A;a)$ with $a=\min(A)$. 
The set of $p$-pseudo-Frobenius numbers is denoted by ${\rm PF}_p(A)$. The {\it $p$-type} is denoted by $t_p(A):=\#\bigl({\rm PF}_p(A)\bigr)$. Notice that the $p$-Frobenius number is given by $g_p(A)=\max\bigl({\rm PF}_p(A)\bigr)$. 

For $p\ge 0$, the $p$-numerical semigroup $S_p$, which is $p$-generated from $A$, is called {\it $p$-pseudo-symmetric} if for all $x\in\mathbb Z\backslash S_p$ with $x\ne\bigl(\ell_0(p)+g_p(A)\bigr)/2\in\mathbb Z$, $\ell_0(p)+g_p(A)-x\in S_p$
%, where $\ell_0(p)$ is the least element of $S_p$
. 

\begin{Lem} 
For a non-negative integer $p$, the following conditions are equivalent:  
\begin{enumerate} 
\item[{\rm (i)}] $S_p$, which is $p$-generated from $A$, is $p$-pseudo-symmetric. 
\item[{\rm (ii)}] %For $j=0,1,\dots,\fl{a/2}$
$$ 
m_{(g+\ell)/2+j}^{(p)}+m_{(g+\ell)/2-j}^{(p)}=g+\ell+
\begin{cases}
2 a&\text{if $j=0$ and $(g+\ell)/2\in G_p(A)$};\\ 
0&\text{if $j=0$ and $(g+\ell)/2\in S_p(A)$};\\ 
a&\text{if $j>0$}. 
\end{cases}
$$ 
\item[{\rm (iii)}] $\displaystyle n_{p}(A)=\dfrac{g+\ell}{2}+\begin{cases}
1&\text{if $(g+\ell)/2\in G_p(A)$};\\ 
0&\text{if $(g+\ell)/2\in S_p(A)$}. 
\end{cases}$
\end{enumerate}
\label{th:ps-sym-ap}
\end{Lem}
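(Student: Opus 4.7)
The plan is to prove $(i)\Leftrightarrow(ii)\Leftrightarrow(iii)$, adapting the arguments behind the analogous Lemmas~\ref{lem:sym-apery-m} and~\ref{lem:sym-n-g} to accommodate the fixed point of the reflection $\sigma(x):=\ell_0(p)+g_p(A)-x$. Throughout write $\ell=\ell_0(p)$, $g=g_p(A)$, $a=a_1$, and $j^*=((\ell+g)/2)\bmod a$; note $(\ell+g)/2\in\mathbb Z$ by the setup of pseudo-symmetry.

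For $(i)\Leftrightarrow(ii)$, I would translate pseudo-symmetry — the statement that $\sigma$ carries $G_p(A)\setminus\{(\ell+g)/2\}$ into $S_p(A)$ — to the residue-class structure of the Ap\'ery set. In class $r\bmod a$, the gaps are $\{r,r+a,\dots,m_r^{(p)}-a\}$ and $\sigma$ interchanges class $r$ with class $s=(\ell+g-r)\bmod a$. For a non-fixed pair ($r\ne s$, equivalently $j\not\equiv 0\pmod a$ in (ii)), applying pseudo-symmetry to the largest gap $m_r^{(p)}-a$ (which is not $(\ell+g)/2$ since $r\ne j^*$) yields $\sigma(m_r^{(p)}-a)\in S_p$ in class $s$, so $m_r^{(p)}+m_s^{(p)}\le\ell+g+a$. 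The reverse inequality — and hence the equality in (ii) — comes from matching cardinalities: the $K_r:=(m_r^{(p)}-r)/a$ gaps of class $r$ map injectively under $\sigma$ into $S_p\cap\,\mathrm{class}\,s$, symmetrically the $K_s$ gaps of class $s$ map injectively into $S_p\cap\,\mathrm{class}\,r$, and these two bijections pin $\sigma(m_r^{(p)}-a)$ to be exactly $m_s^{(p)}$. For the self-paired case $j=0$, the Ap\'ery value $m_{j^*}^{(p)}$ bifurcates: if $(\ell+g)/2\in S_p$ it is the least $S_p$-element of class $j^*$ (its predecessor $(\ell+g)/2-a$ is in $G_p$), giving $m_{j^*}^{(p)}=(\ell+g)/2$; if $(\ell+g)/2\in G_p$ then pseudo-symmetry rules out any $S_p$-element of class $j^*$ strictly below $(\ell+g)/2+a$, forcing $m_{j^*}^{(p)}=(\ell+g)/2+a$. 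Reading these steps in reverse gives $(ii)\Rightarrow(i)$.

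For $(ii)\Leftrightarrow(iii)$, I would substitute into formula~(\ref{mp-n}), $n_p(A)=\frac{1}{a}\sum_{j=0}^{a-1}m_j^{(p)}-\frac{a-1}{2}$, and evaluate the sum via the class pairing. Every non-fixed pair contributes $\ell+g+a$; the self-paired class $j^*$ contributes $(\ell+g)/2$ or $(\ell+g)/2+a$ according to whether $(\ell+g)/2\in S_p$ or $G_p$; when $a$ is even, the secondary self-paired class $j^*+a/2$ (which exists because $\ell+g$ even forces $\ell+g\bmod a$ to be even) contributes $(\ell+g+a)/2$ via the $j=a/2$ case of (ii). Direct arithmetic collapses the sum into exactly the two cases of (iii). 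For the reverse implication, the general inequalities $m_r^{(p)}+m_s^{(p)}\le\ell+g+a$ (obtained as above whenever pseudo-symmetry holds) combined with the prescribed value of $n_p$ force all of those inequalities to be tight, recovering (ii).

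The main obstacle is promoting the one-sided inequality obtained directly from pseudo-symmetry to the full equality demanded by (ii): the hypothesis only gives $\sigma(G_p\setminus\{(\ell+g)/2\})\subseteq S_p$, not an a~priori bijection onto the $S_p$-elements of partnered classes. The resolution is to exploit that $\sigma$ is an involution, so the $K_r$ gaps of class $r$ map injectively into $S_p\cap\,\mathrm{class}\,s$ while the $K_s$ gaps of class $s$ map injectively into $S_p\cap\,\mathrm{class}\,r$; combining both directions squeezes the image of the largest gap in one class onto the smallest $S_p$-element in the partnered class, and the bifurcation of $m_{j^*}^{(p)}$ then accounts cleanly for the fixed point.
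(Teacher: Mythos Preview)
The paper does not prove this lemma here---it is recalled from \cite{KY2}---so there is no in-paper argument to compare against. On its own merits your outline is reasonable, but the step you yourself flag as ``the main obstacle'' is not actually resolved by the fix you propose. From $p$-pseudo-symmetry you correctly obtain $m_r^{(p)}+m_s^{(p)}\le \ell+g+a$ for each non-fixed residue pair $r+s\equiv\ell+g\pmod a$, by pushing the top gap $m_r^{(p)}-a$ through $\sigma$. But swapping $r\leftrightarrow s$ reproduces the \emph{same} inequality, not its reverse; your two injections $\sigma\colon G_r\hookrightarrow S_s$ and $\sigma\colon G_s\hookrightarrow S_r$ only say $|G_r|\le |S_s|$ and $|G_s|\le |S_r|$, and since $|G_r|+|S_r|=|G_s|+|S_s|$ these two bounds are equivalent---there is no squeeze. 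Nothing you have written rules out $\sigma(m_r^{(p)})\in S_p$, i.e.\ strict inequality $m_r^{(p)}+m_s^{(p)}<\ell+g+a$. The same defect makes your $(iii)\Rightarrow(ii)$ circular: you invoke inequalities ``obtained as above whenever pseudo-symmetry holds'', but (iii) does not assume (i).

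What is missing is the companion statement that $x\in S_p$ forces $\ell_0(p)+g_p-x\notin S_p$, i.e.\ that $\sigma$ never carries $S_p\cap[\ell,g]$ back into $S_p$. For $p=0$ this is the one-line closure argument $x+(g-x)=g\notin S_0$; for $p\ge 1$ the target is shifted by $\ell$, and closure of $S_p$ (or of $S_p^{\circ}$) does not hand it to you for free---you need to import the argument from \cite{KY2} that underlies the ``exactly one'' clause of Lemma~\ref{prp:p-sym}(iii). Once that is available, $\sigma$ is a genuine bijection between $G_p\cap[\ell,g]$ and $S_p\cap[\ell,g]$ away from the fixed point $(\ell+g)/2$, and then your residue-class arithmetic for (ii) and your substitution into \eqref{mp-n} for (iii) go through as written.
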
 

\begin{Lem}  
If $S_p(A)$ is $p$-symmetric, then 
\begin{enumerate} 
\item[\rm (i)] ${\rm PF}_p(A)=\{g_p(A)\}$ with $g_p(A)\not\equiv \ell_0(p)\pmod 2$. 
\item[\rm (ii)] $t_p(A)=1$ with $g_p(A)\not\equiv \ell_0(p)\pmod 2$. 
\end{enumerate} 
\label{lem:p-sym} 
\end{Lem}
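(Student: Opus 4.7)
The overall plan is to derive both conclusions from the involution $x\mapsto\ell_0(p)+g_p(A)-x$ supplied by Lemma~\ref{prp:p-sym}(iii). Part (i) decomposes into the claim $\mathrm{PF}_p(A)=\{g_p(A)\}$ and the parity claim $g_p(A)\not\equiv\ell_0(p)\pmod 2$; part (ii) will then be immediate since $t_p(A)=\#\mathrm{PF}_p(A)$.

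To pin down $\mathrm{PF}_p(A)$ I would first note, independently of symmetry, that $g_p(A)\in\mathrm{PF}_p(A)$: any $s\in S_p(A)\setminus\{\ell_0(p)\}$ satisfies $s>\ell_0(p)$, so $g_p(A)+s-\ell_0(p)>g_p(A)$ is forced into $S_p(A)$. For the reverse inclusion I would assume some $x\in\mathrm{PF}_p(A)$ with $x\ne g_p(A)$ and put $y:=\ell_0(p)+g_p(A)-x$. Lemma~\ref{prp:p-sym}(iii) then forces $y\in S_p(A)$, while $x\ne g_p(A)$ forces $y\ne\ell_0(p)$; thus $y\in S_p(A)\setminus\{\ell_0(p)\}$ is an admissible test element in the pseudo-Frobenius condition on $x$, yielding
$$
x+y-\ell_0(p)\;=\;g_p(A)\in S_p(A),
$$
which contradicts $g_p(A)\in G_p(A)$. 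Hence $\mathrm{PF}_p(A)=\{g_p(A)\}$, and (ii) follows at once.

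For the parity claim I plan to use a fixed-point argument on the same involution: if $\ell_0(p)+g_p(A)$ were even, the midpoint $m:=\bigl(\ell_0(p)+g_p(A)\bigr)/2$ would be a non-negative integer with $m+m=\ell_0(p)+g_p(A)$, and Lemma~\ref{prp:p-sym}(iii) would demand that exactly one of the two identical copies of $m$ lie in $S_p$ — impossible. The only mild obstacle I anticipate is a bookkeeping one: the paper defines $\mathrm{PF}_p(A)\subseteq\mathbb Z$, whereas Lemma~\ref{prp:p-sym}(iii) is phrased for non-negative integers. This is dispatched by observing that if $x\in\mathrm{PF}_p(A)$ were negative, then $y=\ell_0(p)+g_p(A)-x$ already exceeds $g_p(A)$, so $y\in S_p(A)$ by definition of the Frobenius number, and $y\ne\ell_0(p)$ because $x\ne g_p(A)$; the contradiction above still closes verbatim.
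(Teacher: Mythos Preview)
Your argument is correct. Note, however, that the paper itself does not prove this lemma: it appears in the ``Fundamental lemmas'' section, where the results are recalled from \cite{KY2} without proof, so there is no in-paper argument to compare against. Your route---using the involution of Lemma~\ref{prp:p-sym}(iii) to (a) derive a contradiction from any second pseudo-Frobenius element via $x+y-\ell_0(p)=g_p(A)$, and (b) rule out even $\ell_0(p)+g_p(A)$ by the fixed-point obstruction---is exactly the natural one and is presumably what \cite{KY2} does as well. The bookkeeping remark about negative $x$ is handled cleanly; in fact you could bypass Lemma~\ref{prp:p-sym}(iii) there entirely and appeal directly to the definition of $p$-symmetric, which is already stated for all $x\in\mathbb Z\setminus S_p$.
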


\begin{Lem}  
Let $S_p(A)$ is $p$-pseudo-symmetric, then   
\begin{enumerate} 
\item[\rm (i)] $\displaystyle {\rm PF}_p(A)=
\begin{cases}
\{g_p(A),\bigl(g_p(A)+\ell_0(p)\bigr)/2\}&\text{if $\bigl(g_p(A)+\ell_0(p)\bigr)/2\in G_p(A)$};\\
\{g_p(A)\}&\text{if $\bigl(g_p(A)+\ell_0(p)\bigr)/2\in S_p(A)$}. 
\end{cases}$  
\item[\rm (ii)] $\displaystyle t_p(A)=
\begin{cases}
2&\text{if $\bigl(g_p(A)+\ell_0(p)\bigr)/2\in G_p(A)$};\\
1&\text{if $\bigl(g_p(A)+\ell_0(p)\bigr)/2\in S_p(A)$}. 
\end{cases}$ 
\end{enumerate} 
\label{lem:p-pseudo-sym} 
\end{Lem}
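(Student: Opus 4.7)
The plan is to establish (i); statement (ii) then follows by taking cardinalities.

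For the easy half, I would first verify $g_p(A) \in {\rm PF}_p(A)$: for any $s \in S_p(A)\setminus\{\ell_0(p)\}$ we have $s > \ell_0(p)$, so $g_p(A) + s - \ell_0(p) > g_p(A)$, which places it in $S_p(A)$. Next, for any $y \in G_p(A)$ with $y \neq g_p(A)$ and $y \neq (g_p(A)+\ell_0(p))/2$, $p$-pseudo-symmetry yields $s := g_p(A)+\ell_0(p)-y \in S_p(A)$, and $y \neq g_p(A)$ forces $s \neq \ell_0(p)$; then $y+s-\ell_0(p)=g_p(A) \notin S_p(A)$, so $y \notin {\rm PF}_p(A)$. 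In case 2 the excluded central value lies in $S_p(A)$ and cannot be pseudo-Frobenius, so this already proves ${\rm PF}_p(A)=\{g_p(A)\}$.

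The main task is case 1: to show $x := (g_p(A)+\ell_0(p))/2 \in {\rm PF}_p(A)$. Fix $s \in S_p(A)\setminus\{\ell_0(p)\}$ and set $y := x+s-\ell_0(p)>x$. If $y > g_p(A)$, then $y \in S_p(A)$; otherwise $x<y\le g_p(A)$, which forces $\ell_0(p)<s<x$. The counting consequence of the $p$-pseudo-symmetric definition together with Lemma~\ref{th:ps-sym-ap}(iii), that each non-central pair $(z,g_p(A)+\ell_0(p)-z)$ in $[\ell_0(p),g_p(A)]$ contains exactly one member of $S_p(A)$ and one of $G_p(A)$, reduces $y \in S_p(A)$ to $y':=x+\ell_0(p)-s \in G_p(A)$. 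I would prove the latter by contradiction: if $y' \in S_p(A)$, then $s,y' \in S_p(A)\cap(\ell_0(p),x)$, and closure of $S_p(A)$ under addition gives $s+y'=x+\ell_0(p) \in S_p(A)$. When $p=0$ this yields $x \in S_p(A)$, contradicting $x \in G_p(A)$. For $p\ge 1$, I would close the argument using the Apéry-set identities of Lemma~\ref{th:ps-sym-ap}(ii): the $j=0$ identity $m_{(g+\ell)/2}^{(p)}=x+a_1$ (which rests on $x \in G_p(A)$), combined with the pairings $m_{k+j}^{(p)}+m_{k-j}^{(p)}=g_p(A)+\ell_0(p)+a_1$ for $j>0$ applied at the residue class of $x+\ell_0(p)$, to extract a residue-class contradiction.

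The main obstacle is this Apéry-based step in the $p\ge 1$ case: one must match the residue of $x+\ell_0(p)$ to its Lemma~\ref{th:ps-sym-ap}(ii) partner and verify that the pairing identity is incompatible with the forced bound coming from $x+\ell_0(p) \in S_p(A)$. Once this contradiction is secured, $y' \in G_p(A)$ and hence $y \in S_p(A)$, so $x \in {\rm PF}_p(A)$, yielding (i); statement (ii) then follows by taking cardinalities.
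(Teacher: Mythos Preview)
The paper does not give its own proof of this lemma: it is quoted in Section~3 (``Fundamental lemmas'') from the earlier paper \cite{KY2}, so there is no in-paper argument to compare against. I therefore comment only on the soundness of your proposal.

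Your treatment of the easy parts is fine: $g_p(A)\in{\rm PF}_p(A)$ is immediate, and for any $y\in G_p(A)$ with $y\neq g_p(A)$ and $y\neq x:=(g_p(A)+\ell_0(p))/2$ the pseudo-symmetry gives a witness $s=g_p(A)+\ell_0(p)-y\in S_p(A)\setminus\{\ell_0(p)\}$ with $y+s-\ell_0(p)=g_p(A)\notin S_p(A)$, so $y\notin{\rm PF}_p(A)$. The counting argument you invoke from Lemma~\ref{th:ps-sym-ap}(iii) is also correct and does yield the ``exactly one'' dichotomy for non-central pairs.

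The genuine gap is the step you flag yourself. From $s,y'\in S_p(A)$ and closure you get $s+y'=x+\ell_0(p)\in S_p(A)$, but for $p\ge 1$ this is \emph{not} a contradiction with $x\in G_p(A)$: whenever $3\ell_0(p)>g_p(A)$ one has $x+\ell_0(p)>g_p(A)$ and hence $x+\ell_0(p)\in S_p(A)$ automatically, independent of any hypothesis on $x$. More to the point, the Ap\'ery-identity route you sketch does not close the gap either. Working out Lemma~\ref{th:ps-sym-ap}(ii) in the relevant residue classes gives precisely
\[
x+s-\ell_0(p)\in S_p(A)\ \Longleftrightarrow\ x+\ell_0(p)-s\notin S_p(A),
\]
i.e.\ the very same dichotomy $y\in S_p(A)\Leftrightarrow y'\in G_p(A)$ you already obtained by counting; it does not decide which alternative holds. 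So the ``residue-class contradiction'' you allude to is not there, and the argument as written does not establish $x\in{\rm PF}_p(A)$ in Case~1 for $p\ge 1$. To complete the proof you need a different idea (for instance, an argument that rules out the existence of $s\in S_p(A)\cap(\ell_0(p),x)$ with $x+\ell_0(p)-s\in S_p(A)$ directly, rather than via the sum $x+\ell_0(p)$).
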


\begin{Lem}  
Assume that $S_p(A)$ is minimally generated by $A:=\{a_1,\dots,a_k\}$. Set $d=\gcd(a_2,\dots,a_{k})$ and $T_p(A)=\{n\in\mathbb N_0|d(n;A_d)>p\}$, where $A_d=\{a_1,a_2/d,\dots,a_k/d\}$, Then we have ${\rm Ap}(S_p,a_1)=d{\rm Ap}(T_p,a_1)$.  
\label{lem:gcd-ape}
\end{Lem}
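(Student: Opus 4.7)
The plan is to show that every element of ${\rm Ap}(S_p, a_1)$ is a multiple of $d$ and that division by $d$ identifies ${\rm Ap}(S_p,a_1)$ bijectively with $d\cdot{\rm Ap}(T_p, a_1)$. First I would note that $\gcd(a_1, d) = \gcd(a_1, a_2, \dots, a_k) = 1$, and establish the reduction identity
\begin{equation*}
d(n; A) = d(n/d; A_d) \quad \text{for every $n \in d\mathbb{N}_0$}.
\end{equation*}
Indeed, in any representation $n = a_1 x_1 + \sum_{i \ge 2} a_i x_i$ with $d \mid n$, the tail $\sum_{i\ge 2} a_i x_i$ lies in $d\mathbb{N}_0$, so $d \mid a_1 x_1$; since $\gcd(a_1,d)=1$, this forces $d \mid x_1$, and writing $x_1 = d t$ and dividing through by $d$ yields a representation of $n/d$ in $A_d$. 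The correspondence is clearly a bijection, so in particular $n \in S_p$ iff $n/d \in T_p$ whenever $d \mid n$.

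The key structural step is to show that every $m \in {\rm Ap}(S_p, a_1)$ satisfies $d \mid m$. I would split the representations of $m$ in $A$ into those with $x_1 = 0$ and those with $x_1 \ge 1$; the latter are in bijection with the representations of $m - a_1$, so their count equals $d(m - a_1; A) \le p$. Since $d(m; A) > p$, at least one representation with $x_1 = 0$ must exist, writing $m$ using only $a_2, \dots, a_k$ and forcing $d \mid m$.

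Combining these ingredients, for $m = d m' \in {\rm Ap}(S_p, a_1)$ the reduction identity gives $d(m'; A_d) > p$, so $m' \in T_p$; and since $m$ is the least element of $S_p$ in its residue class modulo $a_1$, we have $m - d a_1 \notin S_p$ (or $m-da_1<0$), whence $d(m' - a_1; A_d) = d(m - d a_1; A) \le p$ (or $m' - a_1 < 0$), placing $m'$ in ${\rm Ap}(T_p, a_1)$. Thus $m \mapsto m/d$ sends ${\rm Ap}(S_p, a_1)$ injectively into ${\rm Ap}(T_p, a_1)$; since both sets have cardinality $a_1$, this is a bijection, yielding ${\rm Ap}(S_p, a_1) = d \cdot {\rm Ap}(T_p, a_1)$. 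I expect the main obstacle to be the divisibility step --- establishing that Ap\'ery elements of $S_p$ are themselves divisible by $d$ --- after which everything reduces to the reduction identity together with a cardinality argument.
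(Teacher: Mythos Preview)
The paper does not supply its own proof of this lemma: it is stated in the ``Fundamental lemmas'' section as a result recalled from \cite{KY2}, with no argument given. So there is nothing in the present paper to compare your proposal against.

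That said, your argument is correct and is the natural route. The three ingredients --- (a) the bijection between representations of $n$ in $A$ and of $n/d$ in $A_d$ when $d\mid n$, coming from $\gcd(a_1,d)=1$; (b) the observation that any $m\in{\rm Ap}(S_p,a_1)$ must admit a representation with $x_1=0$ (since the representations with $x_1\ge 1$ count only $d(m-a_1;A)\le p$, while $d(m;A)>p$), forcing $d\mid m$; and (c) the cardinality match $|{\rm Ap}(S_p,a_1)|=|{\rm Ap}(T_p,a_1)|=a_1$ --- combine cleanly. One small point worth making explicit in a final write-up: when you pass from $m-a_1\notin S_p$ to $m-da_1\notin S_p$, you are using that $m$ is the \emph{minimal} element of $S_p$ in its residue class modulo $a_1$, so every $m-ja_1$ with $j\ge 1$ lies outside $S_p$; this is exactly what you intend, but stating it prevents a reader from thinking the Ap\'ery condition only controls $m-a_1$. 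With that clarification the proof is complete, and it is essentially the $p$-generalization of Johnson's classical reduction argument, which is presumably what the cited reference does as well.
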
 

\begin{Lem}  
We have 
\begin{enumerate}
\item[\rm (i)] $g_p(A)=d g_p(A_d)+(d-1)a_1$. 
\item[\rm (ii)] $\displaystyle n_p(A)=d n_p(A_d)+\dfrac{(d-1)(a_1-1)}{2}$. 
\item[\rm (iii)] $\displaystyle s_p(A)=d^2 s_p(A_d)+\dfrac{a_1 d(d-1)}{2}n_p(A_d)$$+\dfrac{(a_1-1)(d-1)(2 a_1 d-a_1-d-1)}{2}$. 
\end{enumerate} 
\label{lem:gcd-ape}
\end{Lem}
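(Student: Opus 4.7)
The plan is to use the preceding lemma (the Apéry-set identity ${\rm Ap}(S_p,a_1)=d\,{\rm Ap}(T_p,a_1)$) to reduce everything to the formulas of Lemma \ref{cor-mp} applied to $A_d$. Write $m_j^{(p)}=m_j^{(p)}(A)$ and $\widetilde m_j^{(p)}=m_j^{(p)}(A_d)$. Since $\gcd(A)=1$ and $d\mid a_i$ for $i\ge 2$, we have $\gcd(a_1,d)=1$, so multiplication by $d$ permutes the residues modulo $a_1$. Hence the Apéry-set identity says that, as multisets indexed by a complete residue system mod $a_1$,
$$
\{m_0^{(p)},m_1^{(p)},\dots,m_{a_1-1}^{(p)}\}=\{d\widetilde m_0^{(p)},d\widetilde m_1^{(p)},\dots,d\widetilde m_{a_1-1}^{(p)}\}.
$$
All three identities then follow by substituting into (\ref{mp-g}), (\ref{mp-n}), (\ref{mp-s}).

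For (i), apply (\ref{mp-g}) to both $A$ and $A_d$: $\max_j m_j^{(p)}=d\max_j\widetilde m_j^{(p)}$, so $g_p(A)+a_1=d\bigl(g_p(A_d)+a_1\bigr)$, which rearranges to the claim.

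For (ii), apply (\ref{mp-n}) to $A$ and substitute: $\sum_j m_j^{(p)}=d\sum_j\widetilde m_j^{(p)}=d\bigl(a_1 n_p(A_d)+\tfrac{a_1(a_1-1)}{2}\bigr)$, then divide by $a_1$ and subtract $\tfrac{a_1-1}{2}$; the constant terms collapse into $\tfrac{(d-1)(a_1-1)}{2}$.

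For (iii), the only real work is bookkeeping. Apply (\ref{mp-s}) to $A$, then use
$$
\sum_j \bigl(m_j^{(p)}\bigr)^2=d^2\sum_j\bigl(\widetilde m_j^{(p)}\bigr)^2
=d^2\Bigl(2a_1 s_p(A_d)+a_1\textstyle\sum_j\widetilde m_j^{(p)}-\tfrac{a_1(a_1^2-1)}{6}\Bigr),
$$
(from (\ref{mp-s}) applied to $A_d$), together with $\sum_j\widetilde m_j^{(p)}=a_1 n_p(A_d)+\tfrac{a_1(a_1-1)}{2}$ from (ii). Substituting and collecting the coefficient of $s_p(A_d)$ gives $d^2$, the coefficient of $n_p(A_d)$ gives $\tfrac{a_1 d(d-1)}{2}$, and the remaining constant terms reduce to the displayed expression. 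The main obstacle is therefore purely algebraic: keeping track of the three constants $\tfrac{a_1 d(d-1)(a_1-1)}{4}$, $-\tfrac{d^2(a_1^2-1)}{12}$, and $\tfrac{a_1^2-1}{12}$ when combining them; pulling out the common factor $(d-1)(a_1-1)$ yields the claimed closed form. No new conceptual input beyond Lemmas \ref{cor-mp} and \ref{lem:gcd-ape} is needed.
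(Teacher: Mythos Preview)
The paper does not actually prove this lemma; it is stated in Section~3 (``Fundamental lemmas'') as a result recalled from \cite{KY2}, with no argument given. So there is no ``paper's own proof'' to compare against. Your approach---substituting ${\rm Ap}(S_p,a_1)=d\,{\rm Ap}(T_p,a_1)$ into the formulas (\ref{mp-g}), (\ref{mp-n}), (\ref{mp-s}) of Lemma~\ref{cor-mp}---is the natural and correct one, and your derivations of (i) and (ii) are fine.

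One point worth flagging in (iii): if you actually carry out the bookkeeping you outline, the three constant pieces
\[
\frac{a_1 d(d-1)(a_1-1)}{4}-\frac{d^2(a_1^2-1)}{12}+\frac{a_1^2-1}{12}
=\frac{(d-1)(a_1-1)}{12}\bigl(3a_1 d-(d+1)(a_1+1)\bigr)
=\frac{(a_1-1)(d-1)(2a_1 d-a_1-d-1)}{12},
\]
with denominator $12$, not $2$ as printed in the statement. A quick sanity check with $A=\{3,10,14\}$, $d=2$, $A_d=\{3,5,7\}$ at $p=0$ gives $s_0(A)=38$, which matches the $1/12$ version ($28+9+1$) and not the $1/2$ version ($28+9+6$). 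So your method is right, but the ``displayed expression'' you say you recover appears to contain a typo in the paper; your computation in fact corrects it.
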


\section{A $p$-generalization of Watanabe's Lemma} 

It is not so easy to find any relation between two distinct numerical semigroups. 
In this section, we give some results to keep the $p$-properties, which generalize the classically famous results.   

The first identity of the following proposition is a $p$-generalization of the result by Johnson \cite{johnson60}.  

\begin{Prop}  
Let $\{b_1,\dots,b_k\}$ be the minimal generator system of a semigroup $\ang{b_1,\dots,b_k}$, and let $\alpha$ be a positive integer such that $\alpha\in\ang{b_1,\dots,b_k}$ and $\alpha\ne b_i$ ($i=1,\dots,k$). 
Then for a positive integer $\beta$ with $\gcd(\alpha,\beta)=1$,  
\begin{align*}
g_p(\alpha,\beta b_1,\dots,\beta b_k)&=\beta g_p(\alpha,b_1,\dots,b_k)+(\beta-1)\alpha\,,\\
n_p(\alpha,\beta b_1,\dots,\beta b_k)&=\beta n_p(\alpha,b_1,\dots,b_k)+\frac{(\alpha-1)(\beta-1)}{2}\,. 
\end{align*}
\label{prp:p-johnson}
\end{Prop}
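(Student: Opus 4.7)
The plan is to recognize the Proposition as a direct specialization of the preceding Lemma, which asserts that for $A = \{a_1, \dots, a_k\}$ with $d = \gcd(a_2, \dots, a_k)$ and reduced set $A_d = \{a_1, a_2/d, \dots, a_k/d\}$, one has $g_p(A) = d\, g_p(A_d) + (d-1)a_1$ and $n_p(A) = d\, n_p(A_d) + (d-1)(a_1 - 1)/2$. I would set $a_1 = \alpha$ and $a_{i+1} = \beta b_i$ for $i = 1, \dots, k$. Then $d = \gcd(\beta b_1, \dots, \beta b_k) = \beta \gcd(b_1, \dots, b_k) = \beta$, the last equality using $\gcd(b_1, \dots, b_k) = 1$ (which is implicit here, since $\alpha \in \ang{b_1, \dots, b_k}$ together with $\gcd(\alpha, \beta) = 1$ and the standing requirement that $g_p(\alpha, b_1, \dots, b_k)$ be finite forces $\gcd(b_i) = 1$). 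With these identifications, $A_d = \{\alpha, b_1, \dots, b_k\}$, and the two claimed identities are exactly the Lemma's formulas.

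The one non-routine input is the verification of the Lemma's hypothesis that $A = \{\alpha, \beta b_1, \dots, \beta b_k\}$ is a minimal generating system of $\ang{A}$. I would split this in two steps. First, $\alpha$ is not a non-negative integer combination of the $\beta b_i$, because every such combination is divisible by $\beta$ while $\gcd(\alpha, \beta) = 1$ and $\beta \ge 2$ (the case $\beta = 1$ being trivial). Second, for each $j$, $\beta b_j$ is not a non-negative integer combination of $\alpha$ and $\{\beta b_i : i \ne j\}$: supposing $\beta b_j = c_0 \alpha + \sum_{i \ne j} c_i \beta b_i$, the coprimality of $\alpha$ and $\beta$ forces $\beta \mid c_0$; writing $c_0 = \beta c_0'$ and dividing through by $\beta$ yields $b_j = c_0' \alpha + \sum_{i \ne j} c_i b_i$. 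If $c_0' = 0$ this contradicts the minimality of $\{b_1, \dots, b_k\}$ at once. If $c_0' \ge 1$, substituting a fixed expression $\alpha = \sum_i d_i b_i$ gives $(1 - c_0' d_j) b_j = c_0' \sum_{i \ne j} d_i b_i + \sum_{i \ne j} c_i b_i$; a short case analysis on $c_0' d_j \in \{0, 1, \ge 2\}$, using non-negativity of the right-hand side, either contradicts the minimality of $\{b_1, \dots, b_k\}$ or forces $\alpha = b_j$, which is excluded by hypothesis.

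I expect this minimality check, particularly the sub-case in which the chosen representation of $\alpha$ actually involves $b_j$, to be the main obstacle, because it requires carefully balancing non-negative integer coefficients of $b_j$ on both sides of the equation. Once minimality is secured, the proof reduces to a direct substitution into the Lemma: with $a_1 = \alpha$ and $d = \beta$ one reads off $g_p(\alpha, \beta b_1, \dots, \beta b_k) = \beta g_p(\alpha, b_1, \dots, b_k) + (\beta - 1)\alpha$ and $n_p(\alpha, \beta b_1, \dots, \beta b_k) = \beta n_p(\alpha, b_1, \dots, b_k) + (\beta - 1)(\alpha - 1)/2$, completing the proof.
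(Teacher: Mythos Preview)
Your approach is correct and coincides with the paper's: the paper's entire proof is the single line ``By Lemma~\ref{lem:gcd-ape}, we get the desired result,'' i.e.\ exactly your specialization $a_1=\alpha$, $d=\beta$, $A_d=\{\alpha,b_1,\dots,b_k\}$. You are in fact more careful than the paper, since you explicitly verify the minimality hypothesis of that lemma (and your case analysis for it is sound), whereas the paper leaves this check implicit.
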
  
\begin{proof}
By Lemma \ref{lem:gcd-ape}, we get the desired result.  
\end{proof}

\noindent 
{\it Remark.}  
When $p=0$, it is often possible to reduce the number of generators. For example, in \cite{tr10}, one has 
\begin{align*}
&g_0(a^k,a^k+1,a^k+a,\dots,a^k+a^{k-1})\\
&=a\cdot g_0(a^{k-1},a^k+1,a^{k-1}+1,\dots,a^{k-1}+a^{k-2})+(a-1)(a^k+1)\\
&=a\cdot g_0(a^{k-1},a^{k-1}+1,\dots,a^{k-1}+a^{k-2})+(a-1)(a^k+1)
\end{align*} 
because $a^k+1=(a-1)a^{k-1}+1\cdot(a^{k-1}+1)$. However, when $p>0$, such a reduction cannot happen in general since all number of representations must be counted.

The following is a $p$-generalization of the result by Watanabe \cite{wata73}. 

\begin{theorem} 
With the same conditions as in Proposition \ref{prp:p-johnson}, 
$S_p(\alpha,b_1,\dots,b_k)$ is $p$-symmetric if and only if $S_p(\alpha,\beta b_1,\dots,\beta b_k)$ is $p$-symmetric.    
\label{th:p-watanabe}
\end{theorem}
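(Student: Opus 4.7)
The plan is to reduce the theorem to the Ap\'ery-set characterization of $p$-symmetry given in Lemma \ref{lem:sym-apery} and then transport that characterization through the scaling $b_i \mapsto \beta b_i$ using Lemma \ref{lem:gcd-ape}. The hypotheses $\gcd(\alpha,\beta)=1$ and $\alpha\in\langle b_1,\dots,b_k\rangle$ guarantee that $\gcd(\alpha,\beta b_1,\dots,\beta b_k)=1$, so both semigroups are genuine numerical semigroups, and $\gcd(\beta b_1,\dots,\beta b_k)=\beta$ (since $\{b_1,\dots,b_k\}$ is minimally generating and $\alpha\in\langle b_1,\dots,b_k\rangle$ forces $\gcd(b_1,\dots,b_k)=1$).

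First I would apply Lemma \ref{lem:gcd-ape} to $A=\{\alpha,\beta b_1,\dots,\beta b_k\}$ with $d=\beta$ and $A_\beta=\{\alpha,b_1,\dots,b_k\}$, yielding
$$\mathrm{Ap}_p(A;\alpha)=\beta\cdot\mathrm{Ap}_p(A_\beta;\alpha).$$
Since $\beta>0$ the bijection $x\mapsto\beta x$ preserves order, so if $\ell_i(p)$ denotes the $i$-th ordered element of $\mathrm{Ap}_p(A_\beta;\alpha)$ and $\widetilde\ell_i(p)$ the corresponding one for $A$, then $\widetilde\ell_i(p)=\beta\ell_i(p)$ for $i=0,1,\dots,\alpha-1$. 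In particular $\widetilde\ell_0(p)=\beta\ell_0(p)$. Next, invoking Lemma \ref{lem:sym-apery}, $p$-symmetry of $S_p(A)$ is the statement
$$\widetilde\ell_i(p)+\widetilde\ell_{\alpha-i-1}(p)=g_p(A)+\widetilde\ell_0(p)+\alpha,\qquad i=1,\dots,\lfloor\alpha/2\rfloor.$$
Substituting $\widetilde\ell_i(p)=\beta\ell_i(p)$ and $g_p(A)=\beta g_p(A_\beta)+(\beta-1)\alpha$ from Proposition \ref{prp:p-johnson}, the displayed equation becomes
$$\beta\bigl(\ell_i(p)+\ell_{\alpha-i-1}(p)\bigr)=\beta g_p(A_\beta)+(\beta-1)\alpha+\beta\ell_0(p)+\alpha=\beta\bigl(g_p(A_\beta)+\ell_0(p)+\alpha\bigr),$$
which after dividing by $\beta$ is precisely the Ap\'ery-set characterization of $p$-symmetry for $S_p(A_\beta)=S_p(\alpha,b_1,\dots,b_k)$. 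Thus both conditions hold simultaneously, proving the equivalence.

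The only real obstacle is the justification that Lemma \ref{lem:gcd-ape} legitimately produces the clean scaling $\widetilde\ell_i(p)=\beta\ell_i(p)$. This demands (a) checking that $\gcd(\beta b_1,\dots,\beta b_k)$ is exactly $\beta$ (which uses minimal generation of $\{b_1,\dots,b_k\}$ together with $\alpha\in\langle b_1,\dots,b_k\rangle$, forcing $\gcd(b_1,\dots,b_k)=1$), and (b) that rescaling by the positive integer $\beta$ preserves the ascending order of Ap\'ery elements; the latter is immediate. As an alternative, one could derive the same equivalence from Lemma \ref{lem:sym-n-g}: plug the three identities $g_p(A)=\beta g_p(A_\beta)+(\beta-1)\alpha$, $n_p(A)=\beta n_p(A_\beta)+(\alpha-1)(\beta-1)/2$ (Proposition \ref{prp:p-johnson}) and $\widetilde\ell_0(p)=\beta\ell_0(p)$ (Lemma \ref{lem:gcd-ape}) into $n_p(A)=(g_p(A)+\widetilde\ell_0(p)+1)/2$ and simplify; the factor of $\beta$ cancels and the equation reduces to $n_p(A_\beta)=(g_p(A_\beta)+\ell_0(p)+1)/2$. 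This gives a second, purely numerical route to the same conclusion.
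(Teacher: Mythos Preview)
Your proof is correct. Your primary argument runs through Lemma~\ref{lem:sym-apery} (the ordered Ap\'ery identities $\ell_i+\ell_{\alpha-i-1}=g_p+\ell_0+\alpha$), whereas the paper argues via Lemma~\ref{lem:sym-n-g} (the single numerical identity $n_p=(g_p+\ell_0+1)/2$); in fact the ``alternative'' you describe at the end is exactly the paper's proof. Both routes feed on the same two inputs, namely the scaling ${\rm Ap}_p(A;\alpha)=\beta\,{\rm Ap}_p(A_\beta;\alpha)$ from Lemma~\ref{lem:gcd-ape} (giving $\widetilde\ell_0=\beta\ell_0$) and the Johnson-type identities of Proposition~\ref{prp:p-johnson}. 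Your Ap\'ery route has the virtue of being pointwise (each symmetry equation transforms into the corresponding one), while the paper's genus route collapses everything to one scalar equation, making the cancellation of the $\beta$ factor slightly quicker; neither approach needs any idea the other lacks.
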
 
\begin{proof}  
For simplicity, put $H=\{\alpha,\beta b_1,\dots,\beta b_k\}$ and $H'=\{\alpha,b_1,\dots,b_k\}$. 
First, by Lemma \ref{lem:gcd-ape}, we get  
\begin{equation}
\ell_0(p)=\beta\ell_0'(p)\,,
\label{eq:ll-ll}
\end{equation} 
where $\ell_0(p)$ and $\ell_0'(p)$ are the least elements of ${\rm Ap}_p(H)$ and ${\rm Ap}_p(H')$, respectively.   
Then, by Lemma \ref{lem:sym-n-g}, $H$ is $p$-symmetric if and only if 
\begin{align*}
&n_p(H)=\frac{g_p(H)+\ell_0(p)+1}{2}\\
&\Longleftrightarrow \beta n_p(H')+\frac{(\beta-1)(\alpha-1)}{2}=\frac{\beta g_p(H')+(\beta-1)\alpha+\ell_0(p)+1}{2}\\
&\Longleftrightarrow \beta n_p(H')=\frac{\beta g_p(H')+\ell_0(p)+\beta}{2}\\
&\Longleftrightarrow n_p(H')=\frac{g_p(H')+\ell_0'(p)+1}{2}
\end{align*} 
if and only if $H'$ is $p$-symmetric. 
\end{proof}

\noindent 
{\it Remark.}   
GAP NumericalSgps \cite{DGM,GAP} has the function {\tt DenumerantIdeal}, which calculates the number of representations (solutions) from the minimal generator system. Hence, the result yielded from $\{\alpha,\beta b_1,\dots,\beta b_k\}$ is the same as that from $(\alpha\in)\{\beta b_1,\dots,\beta b_k\}$. Hence, when $p=0$, the real situation matches the calculation, but for $p>0$, it does not because the representations by using $\alpha$ are counted in the real situation.  
For example, $25$ has $4$ representations 
$$
(0, 5, 0),\, (1, 3, 1),\, (2, 1, 2),\, (5, 1, 0) 
$$ 
in terms of $\{4,5,6\}$ and $7$ representations 
$$
(0, 0, 5, 0),\, (0, 1, 3, 1),\, (0, 2, 1, 2),\, (0, 5, 1, 0),\, (1, 0, 1, 2),\, (1, 3, 1, 0),\, (2, 1, 1, 0) 
$$ 
in terms of $\{8,4,5,6\}$.   
See Appendix for more information.

\noindent 
{\bf Examples.}\\  
$\{4,5,6\}$ is a minimal generator system of a semigroup $\ang{4,5,6}$.  
We choose $\alpha=8\in\ang{4,5,6}$ and $\beta=3$. 

We see that 
\begin{align*}
\{g_p(8,4,5,6)\}_{p=0}^{10}&=7,11,15,19,19,23,23,27,27,27,31,\\ 
\{g_p(8,12,15,18)\}_{p=0}^{10}&=37, 49, 61, 73, 73, 85, 85, 97, 97, 97, 109\,, 
\end{align*}
satisfying $g_p(8,12,15,18)=3\cdot g_p(8,4,5,6)+(3-1)\cdot 8$ for $p\ge 0$.  
The function {\tt DenumerantIdeal} yields the same sequence for $g_p(8,4,5,6)$ because it calculates the value from the minimal generator system.  

Note that 
$$
\{g_p(4,5,6)\}_{p=0}^{10}=7, 13, 19, 23, 27, 31, 33, 37, 39, 43, 43.  
$$ 
 
Concerning $H=\{8,12,15,18\}$, we have 
\begin{align*}  
S_8(8,12,15,18)&=\{72,78,80,84,86,87,88,90,92,93,94,95,96,98,\mapsto\}\,,\\
G_8(8,12,15,18)&=\{0,1,\dots,71,73,74,75,76,77,79,81,82,83,85,89,91,97\}\,. 
\end{align*} 
Since $x\in G_8(H)\cup\mathbb Z^{-}\Longleftrightarrow 
72+97-x\in S_8(H)$, we know that $S_8(H)$, which is $8$-generated from $\{8,12,15,18\}$, is $8$-symmetric. 
Concerning $H'=\{8,4,5,6\}$, we have 
\begin{align*}  
S_8(8,4,5,6)&=\{24,26,28,\mapsto\}\,,\\
G_8(8,4,5,6)&=\{0,1,\dots,23,25,27\}\,. 
\end{align*} 
Since $24+27=26+25=28+23=29+22=30+21=\cdots=51$, $S_8(H')$, which is $8$-generated from $\{8,4,5,6\}$, is $8$-symmetric.

\section{Almost symmetric}

For a numerical semigroup $S(A)$, we introduce the sets 
\begin{align*} 
H_p(A)&=\{g_p(A)+\ell_0(p)-s: s\in S_p(A)\}\,,\\ 
L_p(A)&=\{s\in\mathbb Z: s\not\in S_p(A)\,\hbox{and}\,g_p(A)+\ell_0(p)-s\not\in S_p(A)\}\,, 
\end{align*} 
satisfying $H_p(A)\cup L_p(A)\cup S_p(A)=\mathbb N_0$.  
In addition, let $K_p(A):=\{g_p(A)+\ell_0(p)-s: s\not\in S_p(A)\}$ be the $p$-canonical ideal \cite[Proposition 4]{BF97}.  

The set $L_p(A)$ implies that both corresponding $p$-symmetric elements (including the element itself is $p$-symmetric) belong to $G_p(A)$.

A numerical semigroup $S(A)$ is called {\it $p$-almost symmetric} if $L_p(A)\subset{\rm PF}_p(A)$.

\begin{Prop}
The following is equivalent.  
\begin{enumerate}
\item[\rm(i)] $S_p(A)$ is $p$-almost symmetric. 
\item[\rm(ii)] ${\rm PF}_p(A)=L_p(A)\cup\{g_p(A)\}$. 
\item[\rm(iii)] If $x\not\in S_p(A)$, then $g_p(A)+\ell_0(p)-x\in S_p(A)$ or $x\in{\rm PF}_p(A)$. 
%\item[\rm(v)] $S_p^\circ(A)=K_p^\circ(A)+S_p^\circ(A)$. 
\end{enumerate}
\label{prop:p-almostsym}
\end{Prop}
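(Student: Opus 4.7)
The plan is to prove the cyclic chain (i)~$\Rightarrow$~(ii)~$\Rightarrow$~(iii)~$\Rightarrow$~(i), anchored by a preliminary inclusion that requires no symmetry hypothesis, namely
\[
\mathrm{PF}_p(A)\setminus\{g_p(A)\}\subset L_p(A)\qquad\text{and}\qquad g_p(A)\in\mathrm{PF}_p(A).
\]
The second statement is immediate: for any $s\in S_p(A)\setminus\{\ell_0(p)\}$ one has $s>\ell_0(p)$, so $g_p(A)+s-\ell_0(p)>g_p(A)$, and that sum must lie in $S_p(A)$. For the first, fix $x\in\mathrm{PF}_p(A)$ with $x\ne g_p(A)$ and suppose toward contradiction that $t:=g_p(A)+\ell_0(p)-x$ belongs to $S_p(A)$. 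The case $t=\ell_0(p)$ would force $x=g_p(A)$, so necessarily $t\in S_p(A)\setminus\{\ell_0(p)\}$; then the defining property of $p$-pseudo-Frobenius numbers applied to this $t$ yields $x+t-\ell_0(p)=g_p(A)\in S_p(A)$, contradicting that $g_p(A)$ is a gap. Hence $t\notin S_p(A)$, i.e., $x\in L_p(A)$.

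With the preliminary claim in hand, each implication of the proposition reduces to unwrapping definitions. For (i)~$\Rightarrow$~(ii), the hypothesis gives $L_p(A)\subset\mathrm{PF}_p(A)$, and adjoining $g_p(A)\in\mathrm{PF}_p(A)$ produces $L_p(A)\cup\{g_p(A)\}\subset\mathrm{PF}_p(A)$, while the reverse inclusion $\mathrm{PF}_p(A)\subset L_p(A)\cup\{g_p(A)\}$ is precisely the preliminary claim. For (ii)~$\Rightarrow$~(iii), given $x\notin S_p(A)$, either $g_p(A)+\ell_0(p)-x\in S_p(A)$ (the first alternative of (iii)), or else $x\in L_p(A)$, which by (ii) is contained in $\mathrm{PF}_p(A)$. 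For (iii)~$\Rightarrow$~(i), any $x\in L_p(A)$ satisfies both $x\notin S_p(A)$ and $g_p(A)+\ell_0(p)-x\notin S_p(A)$, so the first alternative of (iii) is excluded and $x\in\mathrm{PF}_p(A)$, yielding $L_p(A)\subset\mathrm{PF}_p(A)$.

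The only point requiring care is isolating the degenerate case $t=\ell_0(p)$ in the preliminary claim before invoking the pseudo-Frobenius relation, since $s=\ell_0(p)$ is explicitly excluded from the defining condition; everything else is a direct translation of the definitions of $L_p(A)$ and $\mathrm{PF}_p(A)$, together with the fact that $\mathbb{N}_0=S_p(A)\sqcup G_p(A)$ to justify membership in $S_p(A)$ for integers exceeding $g_p(A)$. I therefore expect no substantive obstacle beyond this single bookkeeping check.
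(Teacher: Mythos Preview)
Your argument is correct and follows essentially the same route as the paper: both proceed through the cycle (i)$\Rightarrow$(ii)$\Rightarrow$(iii)$\Rightarrow$(i), and the substantive step in (i)$\Rightarrow$(ii) is identical---showing that any $x\in\mathrm{PF}_p(A)$ with $x\ne g_p(A)$ must lie in $L_p(A)$ by testing the pseudo-Frobenius condition against $t=g_p(A)+\ell_0(p)-x$. The only organizational difference is that you extract $\mathrm{PF}_p(A)\setminus\{g_p(A)\}\subset L_p(A)$ as a standalone preliminary claim (valid without the almost-symmetric hypothesis), whereas the paper embeds this inside the (i)$\Rightarrow$(ii) step; your version is arguably cleaner, and your explicit treatment of the degenerate case $t=\ell_0(p)$ makes the bookkeeping more transparent than the paper's terse ``$y'>\ell_0(p)$''.
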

\begin{proof}
\noindent 
[(i)$\Longrightarrow$(ii)] 
If $L_p(A)\subset{\rm PF}_p(A)$, 
there exists an element $x\in{\rm PF}_p(A)\backslash L_p(A)$. 
Indeed, it is clear that $g_p(A)\in{\rm PF}_p(A)$ and $g_p(A)\not\in L_p(A)$. Assume that 
for $x'\ne g_p(A)$, $x'\in{\rm PF}_p(A)$ and $x'\not\in L_p(A)$. 
Then $y':=g_p(A)+\ell_0(p)-x'\in S_p(A)$ and $y'>\ell_0(p)$. However, $x'+s-\ell_0(p)\in S_p(A)$ does not hold for $s=y'\in S_p(A)\backslash\{\ell_0(p)\}$. Thus, there does not exist such an $x'$.    
\\\noindent 
[(ii)$\Longrightarrow$(iii)]  
Assume that $x\in G_p(A)$. If $x\in{\rm FP}_p(A)$, then it is finished. Otherwise, by $x\not\in L_p(A)$ and $x\not\in S_p(A)$ we get $g_p(A)+\ell_0(p)-x\in S_p(A)$. 
\\\noindent 
[(iii)$\Longrightarrow$(i)] 
If $x\in L_p(A)\subset G_p(A)$, then by $g_p(A)+\ell_0(p)-x\not\in S_p(A)$ we get $x\in{\rm FP}_p(A)$. 
\end{proof}

From the definitions and Lemmas \ref{lem:p-sym} and \ref{lem:p-pseudo-sym}, we see that 
\begin{align*}
S\hbox{\,is $p$-symmetric}&\Longleftrightarrow K_p(A)=S_p(A)\Longleftrightarrow H_p(A)=G_p(A)\\
&\Longrightarrow L_p(A)=\emptyset\Longleftrightarrow {\rm PF}_p(A)=\{g_p(A)\}\,,
\end{align*}
\begin{align*}
&S\hbox{\,is $p$-pseudo-symmetric}\\
&\Longrightarrow 
L_p(A)=\begin{cases}\left\{\dfrac{g_p(A)+\ell_0(p)}{2}\right\}&\text{if $\dfrac{g_p(A)+\ell_0(p)}{2}\in G_p(A)$}\\
\emptyset&\text{if $\dfrac{g_p(A)+\ell_0(p)}{2}\in S_p(A)$}
\end{cases}\\
&\Longleftrightarrow {\rm PF}_p(A)=\begin{cases}
\left\{g_p(A),\dfrac{g_p(A)+\ell_0(p)}{2}\right\}&\text{if $\dfrac{g_p(A)+\ell_0(p)}{2}\in G_p(A)$}\\
\{g_p(A)\}&\text{if $\dfrac{g_p(A)+\ell_0(p)}{2}\in S_p(A)$}\,.
\end{cases} 
\end{align*}
Therefore, both $p$-symmetric and $p$-pseudo-symmetric imply $p$-almost symmetric.  Namely, every $p$-irreducible numerical semigroup is $p$-almost symmetric.  
\bigskip

There are some typical patterns of $p$-almost symmetric numerical semigroups.  

\begin{Prop}
\begin{align*}
S_p(A)&=\{\underbrace{\ell_0(p),\ell_0(p)+1,\dots,g_p(A)-1}_{g_p(A)-\ell(p)},g_p(A)\mapsto\}\,,\\
S_p(A)&=\{\ell_0(p),g_p(A)\mapsto\}
\end{align*}
are $p$-almost symmetric.  
\end{Prop}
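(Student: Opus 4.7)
The plan is to apply Proposition~\ref{prop:p-almostsym}(iii): $S_p(A)$ is $p$-almost symmetric if and only if for every $x\in\mathbb{Z}\setminus S_p(A)$, either $g_p(A)+\ell_0(p)-x\in S_p(A)$ or $x\in\mathrm{PF}_p(A)$. I treat the two forms separately, because the first turns out to be $p$-symmetric (the pseudo-Frobenius branch is never needed) while the second genuinely requires it.

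For the first form, the set of gaps in $\mathbb{N}_0$ is $\{0,1,\dots,\ell_0(p)-1\}\cup\{g_p(A)\}$. I would split on the location of $x$. When $x\leq\ell_0(p)-1$ (which also subsumes the case $x<0$), the image $g_p(A)+\ell_0(p)-x$ is at least $g_p(A)+1$, and hence lies in $S_p(A)$ since every integer exceeding $g_p(A)$ does. When $x=g_p(A)$, the image equals $\ell_0(p)$, which again lies in $S_p(A)$. Consequently $L_p(A)=\emptyset$, so $S_p(A)$ is even $p$-symmetric, and in particular $p$-almost symmetric.

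For the second form, the gap set in $\mathbb{N}_0$ is $\{0,\dots,\ell_0(p)-1\}\cup\{\ell_0(p)+1,\dots,g_p(A)\}$. The ``end'' indices $x\leq\ell_0(p)-1$ and $x=g_p(A)$ are handled exactly as in the first form: their symmetric partners are either bigger than $g_p(A)$ or equal to $\ell_0(p)$, and both lie in $S_p(A)$. The genuine work is with an interior gap $x\in\{\ell_0(p)+1,\dots,g_p(A)-1\}$, whose partner $g_p(A)+\ell_0(p)-x$ falls in the same interval and is therefore itself a gap. For such $x$ I would verify directly that $x\in\mathrm{PF}_p(A)$: since $S_p(A)\setminus\{\ell_0(p)\}=\{g_p(A)+1,g_p(A)+2,\dots\}$, every $s$ in this set satisfies $x+s-\ell_0(p)\geq(\ell_0(p)+1)+(g_p(A)+1)-\ell_0(p)=g_p(A)+2$, which belongs to $S_p(A)$. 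This shows $L_p(A)=\{\ell_0(p)+1,\dots,g_p(A)-1\}\subseteq\mathrm{PF}_p(A)$, exactly what the definition of $p$-almost symmetric demands.

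Nothing in either verification is technically deep; the main pitfall is the bookkeeping of cases, particularly the negative $x$, the self-image point $x=g_p(A)$ (whose partner is $\ell_0(p)$ itself and hence in $S_p(A)$), and the recognition that only the interior gaps in the second form are genuine pseudo-Frobenius witnesses. I also want to note in passing that under the first form one even obtains $L_p(A)=\emptyset$, so the semigroup is $p$-symmetric, whereas under the second form the containment $L_p(A)\subseteq\mathrm{PF}_p(A)$ is strict unless the interior range is empty.
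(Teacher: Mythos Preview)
Your proof is correct and follows essentially the same approach as the paper: compute $L_p(A)$ in each case and check the inclusion $L_p(A)\subseteq\mathrm{PF}_p(A)$. The paper's proof simply asserts that $L_p(A)=\emptyset$, $\mathrm{PF}_p(A)=\{g_p(A)\}$ for the first form and $L_p(A)=\{\ell_0(p)+1,\dots,g_p(A)-1\}$, $\mathrm{PF}_p(A)=\{\ell_0(p)+1,\dots,g_p(A)\}$ for the second, whereas you actually verify these claims via the case splits and the explicit check that interior gaps are pseudo-Frobenius; your version is thus a more detailed rendition of the same argument.
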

\begin{proof}
For the first numerical semigroup, we have 
$$
L_p(A)=\emptyset\quad\hbox{and}\quad {\rm PF}_p(A)=\{g_p(A)\}\,. 
$$ 
For the second numerical semigroup, we have 
$$
L_p(A)=\{\underbrace{\ell_0(p)+1,\ell_0(p)+2,\dots,g_p(A)-1}_{g_p(A)-\ell(p)-1}\}
$$ 
and
$$ 
{\rm PF}_p(A)=\{\underbrace{\ell_0(p)+1,\ell_0(p)+2,\dots,g_p(A)}_{g_p(A)-\ell(p)}\}\,. 
$$ 
\end{proof}
\medskip 

\noindent 
{\bf Examples.}\\
For $A=\{17,18,19\}$, we have 
\begin{align*}
S_5(A)&=\{180=\ell_0(5),\underbrace{197,198,199},\underbrace{214,\dots,218},231\mapsto\}\,,\\
G_5(A)&=\{\dots,179,\underbrace{181,\dots,196},\underbrace{200,\dots,213},\underbrace{219,\dots,229},230=g_5(A)\}\,,\\
H_5(A)&=\{\dots,179,\underbrace{192,\dots,196},\underbrace{211,212,213},230\}\,,\\
L_5(A)&=\{\underbrace{181,\dots,191},\underbrace{200,\dots,210},\underbrace{219,\dots,229}\}\,,\\
K_5(A)&=\{\underbrace{180,\dots,191},\underbrace{197,\dots,210},\underbrace{214,\dots,229},231\mapsto\}\,,\\
{\rm PF}_5(A)&=\{219,\dots,230\}\,. 
\end{align*}
We see that $H_5(A)\cup L_5(A)\cup S_5(A)=\mathbb N_0$. 
%We have $t_5(A)=|{\rm PF}_5(A)|=12$. 
%In general, 
%$$
%\{t_p(A)\}_{p=0}^{20}=2,4, 6, 8, 10, 12, 14, 16, 17, 17, 17, 17, 16, 15, 15, 14, 13, 13, 12, 13, 13\,. 
%$$ 
Since $L_p(A)\not\subseteq {\rm PF}_p(A)$, $S_5(A)$ is not $5$-almost symmetric.  

Similarly, for $0\le p\le 6$, $13\le p\le 20$ and $22\le p\le 28$, $S_{p}(A)$ is not $p$-almost symmetric. 
For $p=7,12,21$ and $p=29,30,44$, $S_{p}(A)$ is neither $p$-symmetric nor $p$-pseudo-symmetric but $p$-almost symmetric.  
For $8\le p\le 11$ and $31\le p\le 43$, $S_{p}(A)$ is $p$-completely symmetric, so $p$-almost symmetric. 

\medskip

Let $A=\{6,7,17\}$. Then 
\begin{align*}
S_{14}(A)&=\{126=\ell_0,131\mapsto\}\,,\\
G_{14}(A)&=\{\dots,125,127,128,129,130=g_{14}\}\,,\\
L_{14}(A)&=\{127,128,129\}\,,\\
{\rm PF}_{14}(A)&=\{127,128,129,130\}\,,\\
K_{14}&=\{126,127,128,129,131\mapsto\}\,. 
\end{align*}
Then for $z\in\{127,128,129\}$ we get $z\not\in S_{14}(A)$ and $g_{14}(A)+\ell_0(14)-z\not\in S_{14}(A)$, but $z\in{\rm PF}_{14}(A)$, satisfying the condition (iii) of Proposition \ref{prop:p-almostsym}. The condition (ii) is satisfied because $L_{14}(A)\subset L_{14}(A)\cup\{g_{14}(A)\}={\rm PF}_{14}(A)$.  

We also have  
\begin{align*}
S_{16}(A)&=\{138=\ell_0,,139,140,142\mapsto\}\,,\\
G_{16}(A)&=\{\dots,137,141=g_{16}\}\,,\\
L_{16}(A)&=\emptyset\,,\\
{\rm PF}_{16}(A)&=\{141\}\,,\\
K_{16}&=\{138,142\mapsto\}\,. 
\end{align*}
Then for all $z\in G_{16}(A)$ we get $g_{16}(A)+\ell_0(16)-z\in S_{16}(A)$, satisfying the condition (iii) of Proposition \ref{prop:p-almostsym}. The condition (ii) is satisfied because $L_{16}(A)\subset L_{16}(A)\cup\{g_{16}(A)\}={\rm PF}_{16}(A)$.  
Therefore, for $p=14,16$, $S_{p}(A)$ is neither $p$-symmetric nor $p$-pseudo-symmetric but $p$-almost symmetric.  
In fact, for $p=1,6,7,8,9,10,11,12,13,15,17,18,21,22,24,\dots$, $S_{p}(A)$ is $p$-symmetric, so $p$-almost symmetric. 
For $p=0,4,5,19,20,23,25$, $S_{p}(A)$ is $p$-pseudo-symmetric, so $p$-almost symmetric.

\section{$p$-Arf numerical semigroup} 

A numerical semigroup $S$ is called an {\it Arf numerical semigroup} if for every $x,y,z\in S$ such that $x\ge y\ge z$, then $x+y-z\in S$. Arf semigroups help to characterize Arf rings, an important class of rings in commutative algebra and algebraic geometry \cite{BDF1997,DM01,GHKR17}.  

%For a nonnegative integer $p$, a numerical semigroup $S_p(A)$ is called a {\it $p$-Arf numerical semigroup} if for every $x,y,x\in S_p$ such that $x\ge y\ge z$, then $x+y-z\in S_p$.    

\begin{Prop}
If $S(A)$ for $A=\{a,b\}$ with $\gcd(a,b)=1$ is an Arf numerical semigroup, then $S_p(A)$ is also an Arf numerical semigroup.  
\label{prp:arf}
\end{Prop}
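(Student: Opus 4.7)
The plan is to reduce the $p$-Arf condition to the classical ($p=0$) Arf condition by exhibiting $S_p(\{a,b\})$ as a rigid translate of $S_0(\{a,b\})$. Concretely, I will show
$$
S_p(\{a,b\}) = pab + S_0(\{a,b\}) = \{pab + s : s \in S_0(\{a,b\})\},
$$
after which an Arf triple in $S_p$ becomes an Arf triple in $S_0$ upon subtracting $pab$, and the Arf property transfers automatically.

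To establish the translate identity I would compute the $p$-Ap\'ery set directly. The ordinary Ap\'ery set is $\mathrm{Ap}_0(\{a,b\};a) = \{0, b, 2b, \ldots, (a-1)b\}$. Writing any $n \geq 0$ uniquely as $n = xa + yb$ with $0 \leq x < b$, one has $d(n) = \lfloor y/a \rfloor + 1$ when $y \geq 0$ and $d(n) = 0$ otherwise (note $y \geq -a+1$, since $yb = n - xa > -ab$). Passing from $n$ to $n+ab$ fixes $x$ and increases $y$ by $a$, so $d(n+ab) = d(n)+1$ for every $n \geq 0$, and by iteration $d(n+pab) = d(n)+p$. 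Hence $d(n) > p$ is equivalent to $n \geq pab$ together with $n - pab \in S_0(\{a,b\})$; equivalently, $m_j^{(p)} = m_j + pab$ for every residue $j$, which gives the translate identity.

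Given the translate identity, take any $x, y, z \in S_p(\{a,b\})$ with $x \geq y \geq z$ and write $x = pab + x'$, $y = pab + y'$, $z = pab + z'$ with $x', y', z' \in S_0(\{a,b\})$ and $x' \geq y' \geq z' \geq 0$. The hypothesis that $S_0(\{a,b\})$ is Arf gives $x' + y' - z' \in S_0(\{a,b\})$, and therefore
$$
x + y - z = pab + (x' + y' - z') \in pab + S_0(\{a,b\}) = S_p(\{a,b\}),
$$
which is the required Arf condition for $S_p(\{a,b\})$.

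The main obstacle is the translate identity itself; in particular, the core step $d(n+ab) = d(n)+1$ requires a small case split according to whether the canonical $y$-coordinate is non-negative, and one must verify that no element below $pab$ slips into $S_p$ (which follows from $\lfloor y/a\rfloor \leq n/(ab) < p$ when $n < pab$). Once that identity is secured, the Arf transfer is immediate, since translation by $pab$ preserves both order and differences, so nothing more than the hypothesis on $S_0$ is needed.
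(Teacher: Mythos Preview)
Your proposal is correct and follows essentially the same route as the paper: both reduce to the translate identity $S_p(\{a,b\}) = pab + S_0(\{a,b\})$ and then transfer the Arf property by subtracting and re-adding $pab$. The only difference is that the paper invokes Lemma~4.3 of \cite{KY2} (characterizing membership in $S_p$ via the standard-form coefficient bound $k_i \ge pb$) for this fact, whereas you derive it directly from the two-variable denumerant formula $d(n)=\lfloor y/a\rfloor+1$.
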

\begin{proof} 
Assume that for every $x,y,x\in S_p$ such that $x\ge y\ge z$. We write $x$, $y$ and $z$ in the standard form as $x=a k_1+b h_1$, $y=a k_2+b h_2$ and $z=a k_3+b h_3$. Then by Lemma 4.3 in \cite{KY2}, $k_i\ge p b$ ($i=1,2,3$). Put $x'=x-p b=a(k_1-p b)+b h_1$, $y'=y-p b=a(k_y-p b)+b h_y$ and $z'=z-p b=a(k_3-p b)+b h_3$. Since $k_1-p b\ge 0$ and $h_i\ge 0$ ($i=1,2,3$), we get $x',y',z'\in S$ with $x'\ge y'\ge z$. As $S$ is an Arf, we have $x'+y'-z'\in S$. Hence, $x'+y'-z'$ has the standard form $x'+y'-z'=a k_0+b h_0$ with $k_0,h_0\ge 0$. Then by $x+y-z=x'+y'-z'+p a b=a(p b+k_0)+b h_0$ and Lemma 4.3 in \cite{KY2}, we have $x+y-z\in S_p$, so $S_p$ is also an Arf.  
\end{proof}

\begin{Lem}  
Let $S$ be an Arf numerical semigroup generated from $A$ with $a=\min(A)$. For a nonnegative integer $p$, let 
%$p$-multiplicity be $\ell:=\ell_0(p)$ and 
$p$-conductor be $c_p$, that is, $c_p=g_p(A)+1$. $\overline{c_p}$ denotes the residue modulo $a$, that is $c_p\equiv\overline{c_p}\pmod{a}$ with $0\le\overline{c_p}<a$.  Then, we have 
\begin{enumerate}  
\item[\rm (i)] $\displaystyle m_1^{(p)}=
\begin{cases}
c_p+1&\text{if $c_p\equiv 0\pmod{a}$}\\
c_p-\overline{c_p}+a+1&\text{otherwise}. 
\end{cases}$ 
\item[\rm (ii)] $m_{a-1}^{(p)}=c_p-\overline{c_p}+a-1$. 
\end{enumerate}   
\label{lem:m-con}
\end{Lem}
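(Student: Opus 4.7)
The plan is to verify the three defining conditions of the $p$-Ap\'ery element $m_j^{(p)}$: the residue $j \pmod{a}$, membership in $S_p(A)$, and that the predecessor $m_j^{(p)} - a$ lies in $G_p(A)$. For each claimed value the residue is read off by direct computation (for instance $c_p - \overline{c_p} + a - 1 \equiv a - 1 \pmod{a}$ since $c_p - \overline{c_p}$ is a multiple of $a$), and each claimed value is at least $c_p$, so membership in $S_p(A)$ is immediate from the definition of the $p$-conductor: $c_p + 1 \ge c_p$; $c_p - \overline{c_p} + a + 1 \ge c_p + 1$ since $a \ge \overline{c_p} + 1$; and $c_p - \overline{c_p} + a - 1 \ge c_p$ since $a - 1 \ge \overline{c_p}$.

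The delicate step is the predecessor check. For part (ii), one must show $c_p - \overline{c_p} - 1 \notin S_p(A)$. When $\overline{c_p} = 0$ this is immediate, since $c_p - \overline{c_p} - 1 = c_p - 1 = g_p(A) \in G_p(A)$. When $\overline{c_p} \neq 0$, I would argue by contradiction: assume $c_p - \overline{c_p} - 1 \in S_p(A)$ and invoke the Arf property of $S$ applied to a triple built from $c_p - \overline{c_p} - 1$ together with the multiple $c_p - \overline{c_p}$ of $a$ (which lies in $S$ via the canonical all-$a$ representation) and a suitable smaller element. Iterating the Arf relation should yield enough distinct representations of $c_p - 1$ to witness $d(c_p - 1; A) > p$, contradicting $c_p - 1 = g_p(A) \in G_p(A)$.

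Part (i) is handled symmetrically. The predecessor of $m_1^{(p)}$ is either $c_p + 1 - a$ (when $\overline{c_p} = 0$) or $c_p - \overline{c_p} + 1$ (otherwise), each of which lies at or below $g_p(A)$, so the same contradiction scheme as in (ii) applies. The split into the two subcases in (i) simply reflects whether the ``target'' residue class lies above or at the conductor modulo $a$.

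The main obstacle will be converting the Arf property of $S$ (a set-membership condition: $x + y - z \in S$ whenever $x \ge y \ge z$ lie in $S$) into the representation-count condition defining $S_p(A)$. The Arf condition does not directly control $d(x + y - z; A)$, so the crux is a combinatorial bookkeeping argument that assembles $p + 1$ distinct representations of $c_p - 1$ from the hypothetical $p + 1$ representations of $c_p - \overline{c_p} - 1$ together with the many representations of the $a$-multiple $c_p - \overline{c_p}$. Getting this amplification right is what distinguishes the Arf case from a generic numerical semigroup, and I expect this to be where the Arf hypothesis is genuinely used.
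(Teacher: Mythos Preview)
Your reduction to the three defining conditions of $m_j^{(p)}$ is sound, and your verification of the residue class and of membership in $S_p$ (via the inequality $\ge c_p$) is correct. The problem lies entirely in step (3), the predecessor check, and specifically in the mechanism you sketch for the case $\overline{c_p}\ne 0$.

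You propose to assume $c_p-\overline{c_p}-1\in S_p$ and then use the Arf property of $S$ to manufacture more than $p$ representations of $g_p=c_p-1$. But the Arf property of $S$ is a pure membership statement: from $x\ge y\ge z$ in $S$ one learns only that $x+y-z\in S$, i.e.\ that $x+y-z$ has \emph{at least one} representation. It says nothing about how many, and it gives no way to transport the $p+1$ hypothetical representations of $c_p-\overline{c_p}-1$ (an element in residue class $a-1$ modulo $a$) over to $c_p-1$ (residue class $\overline{c_p}-1$). There is no evident injection between these factorization sets, and iterating the Arf identity does not produce one. So the ``amplification'' you flag as the crux is not merely delicate; as stated it does not go through.

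The paper's argument avoids representation counting altogether. It invokes \cite[Lemma~13]{GHKR17}, whose content is this: in an Arf numerical semigroup with multiplicity $a$ and conductor $c$, every nonzero element congruent to $1$ or to $a-1$ modulo $a$ already lies at or above $c$. The reason is that if $qa$ and $qa+1$ (respectively $(q{+}1)a$ and $(q{+}1)a-1$) both belong to an Arf semigroup, then repeated application of the Arf relation with consecutive elements forces every integer from $qa$ onward into the semigroup, whence $qa\ge c$. The paper applies this argument to $S_p$ with its conductor $c_p$: one gets $ah+1\notin S_p$ and $ah+(a-1)\notin S_p$ for all $h$ below the relevant threshold, and the formulas for $m_1^{(p)}$ and $m_{a-1}^{(p)}$ drop out.

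Observe that for this to work one needs the Arf property for $S_p^\circ$ itself, not merely for $S$; this is precisely the hypothesis stated in the Corollary immediately following the Lemma (and is supplied by the preceding Proposition in the two-generator case). Your attempt to make do with only ``$S$ is Arf'' together with a factorization-counting argument is where the plan breaks down: the right move is to work with membership in $S_p$ directly, under an Arf hypothesis on $S_p^\circ$.
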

\begin{proof}
As $a\nmid g_p(A)$, we see that $c_p\not\equiv 1\pmod{a}$. 
Let $c_p\equiv 0\pmod{a}$. As in \cite[Lemma 13]{GHKR17},  
$a h+1\not\in S_p$ and $a h+a-1\not\in S_p$ for $h<c_p/a$. Hence, $m_1^{(p)}=a(c_p/a)+1=c_p+1$ and $m_{a-1}^{(p)}=a(c_p/a)+a-1=c_p+a-1$. 

Let $c_p\not\equiv 0\pmod{a}$. As in \cite[Lemma 13]{GHKR17},   
$a h+1\not\in S_p$ and $a h+a-1\not\in S_p$ for $h<(c_p-\overline{c_p})/a$. Hence, $m_{a-1}^{(p)}=a\bigl((c_p-\overline{c_p})/a+1\bigr)+1=c_p-\overline{c_p}+a+1$ and $m_{a-1}^{(p)}=a\bigl((c_p-\overline{c_p})/a\bigr)+a-1=c_p-\overline{c_p}+a-1$. 
\end{proof}

For a nonnegative integer $p$ and every $i\in\{0,1,\dots\}$, there is a positive integer $k_i^{(p)}$ such that $m_i^{(p)}=k_i^{(p)}a+i$. Then $(k_0^{(p)}, k_1^{(p)},\dots,k_{a-1}^{(p)})$ are called {\it $p$-Kunz coordinates} of $S_p$. 
We can translate Lemma \ref{lem:m-con} to the language of Kunz coordinates \cite{GHKR17,HK1}.

\begin{Cor}  
Let $S_p(A)$ be an Arf numerical semigroup with $a=\min(A)$, $p$-conductor $c_p$ and $p$-Kunz coordinates $(k_0^{(p)}, k_1^{(p)},\dots,k_{a-1}^{(p)})$. Then, 
$$
k_1^{(p)}=\cl{\frac{c_p}{a}}\quad\hbox{and}\quad k_{a-1}^{(p)}=\fl{\frac{c_p}{a}}\,. 
$$ 
\label{lem:p-kunz}
\end{Cor}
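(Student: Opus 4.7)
The corollary is a direct translation of Lemma~\ref{lem:m-con} through the defining relation $m_i^{(p)}=k_i^{(p)}a+i$, so the plan is simply to invert that relation in the two cases identified in Lemma~\ref{lem:m-con}.

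First, I would write $k_1^{(p)}=(m_1^{(p)}-1)/a$ and $k_{a-1}^{(p)}=(m_{a-1}^{(p)}-(a-1))/a$ from the definition of the $p$-Kunz coordinates. Then I would split into the two cases of Lemma~\ref{lem:m-con} according to whether $a\mid c_p$.

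In the case $c_p\equiv 0\pmod a$ (so $\overline{c_p}=0$ and $\lceil c_p/a\rceil=\lfloor c_p/a\rfloor=c_p/a$), Lemma~\ref{lem:m-con}(i) gives $m_1^{(p)}=c_p+1$, hence $k_1^{(p)}=c_p/a=\lceil c_p/a\rceil$; and Lemma~\ref{lem:m-con}(ii) gives $m_{a-1}^{(p)}=c_p+a-1$, hence $k_{a-1}^{(p)}=c_p/a=\lfloor c_p/a\rfloor$. In the case $c_p\not\equiv 0\pmod a$ (so $0<\overline{c_p}<a$, $\lfloor c_p/a\rfloor=(c_p-\overline{c_p})/a$ and $\lceil c_p/a\rceil=(c_p-\overline{c_p})/a+1$), Lemma~\ref{lem:m-con}(i) gives $m_1^{(p)}=c_p-\overline{c_p}+a+1$, hence
$$k_1^{(p)}=\frac{c_p-\overline{c_p}+a}{a}=\frac{c_p-\overline{c_p}}{a}+1=\cl{\frac{c_p}{a}};$$
and Lemma~\ref{lem:m-con}(ii) gives $m_{a-1}^{(p)}=c_p-\overline{c_p}+a-1$, hence $k_{a-1}^{(p)}=(c_p-\overline{c_p})/a=\fl{c_p/a}$.

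There is really no obstacle here: the content is entirely in Lemma~\ref{lem:m-con}, and the corollary is a bookkeeping step. The only minor care point is checking the ceiling/floor identities on the boundary case $\overline{c_p}=0$, where ceiling and floor coincide and the shift by $1$ disappears; this is handled automatically by the case split above.
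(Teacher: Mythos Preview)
Your proposal is correct and follows essentially the same route as the paper: both proofs split into the two cases of Lemma~\ref{lem:m-con}, substitute the given values of $m_1^{(p)}$ and $m_{a-1}^{(p)}$ into the defining relation $m_i^{(p)}=k_i^{(p)}a+i$, and then identify the resulting expressions with $\lceil c_p/a\rceil$ and $\lfloor c_p/a\rfloor$. If anything, you are slightly more explicit than the paper in spelling out the floor/ceiling identities at the end.
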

\begin{proof}
When $c_p\equiv 0\pmod{a}$, by Lemma \ref{lem:m-con}, we have $m_1^{(p)}=k_1^{(p)}a+1=c_p+1$ and $m_{a-1}^{(p)}=k_{a-1}^{(p)}a+a-1=c_p+a-1$. Hence, $k_1^{(p)}=k_{a-1}^{(p)}=c_p/a$. 
When $c_p\not\equiv 0\pmod{a}$, by Lemma \ref{lem:m-con}, we have $m_1^{(p)}=k_1^{(p)}a+1=c_p-\overline{c_p}+a+1$ and $m_{a-1}^{(p)}=k_{a-1}^{(p)}a+a-1=c_p-\overline{c_p}+a-1$. Hence, $k_1^{(p)}=(c_p-\overline{c_p})/a+1$ and $k_{a-1}^{(p)}=(c_p-\overline{c_p})/a$. 
\end{proof}

\section{Final comments}  

Classically, there are many concepts among numerical semigroups. There are various generalization possibilities for the classical numerical semigroup, and even the $p$-generalization in this paper has a large amount of fluctuation. For example, the concept of {\it type} has very important roles in the symmetric properties, but there is still discussion about how to $p$-generalize its properties. Nari \cite{Nari} showed that any numerical semigroup satisfying $2 n_0(A)=g_0(A)+t_0(A)$ is almost symmetric, but it has been unknown what its $p$-generalized formula is.

%\section*{Acknowledgments} 
%The author thanks the referee for carefully reading the manuscript and for giving constructive comments.  

\section{Appendix A (by Pedro A. Garc\'ia-S\'anchez)}  

As for taking a list of non-minimal generators, Professor Pedro A. Garc\'ia-S\'anchez suggested me to modify the function {\tt DenumerantIdeal} as follows.   

{\small \texttt{   
\begin{flushleft} 
denumerantideal:=function(l,p)\\
\quad    local msg, m, maxgen, factorizations, n, i, f, facts, toadd, gaps, ap, di,s;\\
\quad \\
\quad    if p<0 then\\
\qquad        Error("The integer argument must be non-negative");\\
\quad    fi;\\
\quad    if p=0 then\\
\qquad        return 0+s;\\
\quad    fi;\\
\quad    msg:=l;\\
\quad    m:=Minimum(l);\\
\quad    maxgen:=Maximum(msg);\\
\quad    s:=NumericalSemigroup(l);\\
\quad    factorizations:=\lbrack \rbrack;\\
\quad    gaps:=\lbrack 0\rbrack;\\
\quad    ap:=List(\lbrack 1..m\rbrack,\_->0);\\
\quad    n:=0;\\
\quad    while ForAny(ap,x->x=0) do\\
\qquad        if n>maxgen then\\
\qquad\quad            factorizations:=factorizations\{\lbrack 2..maxgen+1\rbrack\};\\
\qquad        fi;\\
\qquad        factorizations\lbrack Minimum(n,maxgen)+1\rbrack:=\lbrack\rbrack;\\
\quad \\ 
\qquad        for i in \lbrack 1..Length(msg)\rbrack do\\
\qquad            if n-msg\lbrack i\rbrack >= 0 then\\
\quad\qquad                facts:=\lbrack List(msg,x->0)\rbrack;\\
\quad\qquad                if n-msg\lbrack i\rbrack>0 then\\
\qquad\qquad                    facts:=factorizations\lbrack Minimum(n,maxgen)+1-msg\lbrack i\rbrack\rbrack;\\
\quad\qquad                fi;\\
\quad \\ 
\qquad\qquad                for f in facts do\\
\qquad\quad\quad                    toadd:=List(f);\\
\qquad\qquad\quad                    toadd\lbrack i\rbrack:=toadd\lbrack i\rbrack+1;\\
\qquad\qquad\quad                    Add(factorizations\lbrack Minimum(n,maxgen)+1\rbrack,toadd);\\
\qquad\qquad                od;\\
\qquad\quad            fi;\\
\qquad        od;\\
\quad \\ 
\qquad        factorizations\lbrack Minimum(n,maxgen)+1\rbrack:=Set(factorizations\lbrack Minimum(n,maxgen)+1\rbrack);\\
\quad \\
\qquad        if Length(factorizations\lbrack Minimum(n,maxgen)+1\rbrack)<=p then\\
\qquad\quad            Add(gaps,n);\\
\qquad        else\\
\qquad\quad           if ap\lbrack(n mod m) +1\rbrack=0 then\\
\qquad\qquad                ap\lbrack(n mod m)+1\rbrack:=n;\\
\qquad\quad            fi;\\
\qquad        fi;\\
\qquad        n:=n+1;\\
\quad   od;\\
\quad \\ 
\quad    di:=ap+s;\\
\quad    Setter(SmallElements)(di,Difference(\lbrack 0..Maximum(gaps)+1\rbrack,gaps));\\
\quad \\ 
\quad    return di;\\
end; \\
\end{flushleft}
}} 

Then we can use it in the following way: 

{\small \texttt{   
\begin{flushleft}
gap> i:=denumerantideal(\lbrack4,7,8\rbrack,2);\\
<Ideal of numerical semigroup>\\
gap> FrobeniusNumber(i);\\
33
\end{flushleft}
}} 


\begin{thebibliography}{99}

\bibitem{Apery}  
R. Ap\'ery,  {\em 
Sur les branches superlin\'eaires des courbes alg\'ebriques}, 
C. R. Acad. Sci. Paris {\bf 222} (1946), 1198--1200. 

\bibitem{ADG20} 
A. Assi, M. D'Anna and P. A. Garc\'ia-S\'anchez, {\em 
Numerical semigroups and applications}, Second edition, 
RSME Springer Series, 3. Springer, Cham, 2020.  

\bibitem{BDF1997} 
V. Barucci, D. E. Dobbs and M. Fontana, {\em 
Maximality properties in numerical semigroups and applications to one-dimensional analytically irreducible local domains}, 
Mem. Am. Math. Soc. {\bf 598} (1997), 78 p. 

\bibitem{BF97}
V. Barucci and R. Fr\"oberg, {\em  
One-dimensional almost Gorenstein rings}, 
J. Algebra {\bf 188} (1997), no. 2, 418--442. 

\bibitem{bgk01}   
M. Beck, I. M. Gessel and T. Komatsu, {\em    
The polynomial part of a restricted partition function related to the Frobenius problem}, 
Electron. J. Combin. {\bf 8} (No.1) (2001), \#N7, 7 p.

\bibitem{bi20} 
D. S. Binner, {\em 
The number of solutions to $a x+b y+c z=n$ and its relation to quadratic residues}, 
J. Integer Seq. {\bf 23}, No. 6, (2020), Article 20.6.5, 19 p.  

\bibitem{bs62} 
A. Brauer and B. M. Shockley, {\em  
On a problem of Frobenius}, 
J. Reine. Angew. Math. {\bf 211} (1962), 215--220.  

\bibitem{cayley} 
A. Cayley, {\em 
On a problem of double partitions}, 
Philos. Mag. {\bf XX} (1860), 337--341.

\bibitem{DGM} 
M. Delgado, P. A. Garc\'ia-S\'anchez, J. Morais, {\em 
"numericalsgps": a GAP package on numerical semigroups},  
http://www.gap-system.org/Packages/numericalsgps.html.   

\bibitem{GAP}  
M. Delgado, P. A. Garc\'ia-S\'anchez, J. Morais, {\em  
numericalsgps- a package for numerical semigroups}, 
Version 1.3.0 dev, 2022, (Refereed GAP package), 
https://github.com/numerical-semigroups 

\bibitem{DM01}
D. E. Dobbs and G. L. Matthews, {\em 
On comparing two chains of numerical semigroups and detecting Arf semigroups}, 
Semigroup Forum {\bf 63} (2001), 237--246. 

\bibitem{GHKR17}
P. A. Garc\'ia-S\'anchez, B. A. Heredia, H. I. Karaka\c{s} and J. C. Rosales, {\em 
Parametrizing Arf numerical semigroups}, 
J. Algebra Appl. {\bf 16} (2017), no.11, 1750209, 31 p. 

\bibitem{HK1}  
J. Herzog and E. Kunz, {\em 
Der kanonische Modul eines Cohen-Macaulay-Rings}, 
Lecture Notes in Math. 238. Berlin-Heidelberg-New York, Springer-Verlag, 1971. 

\bibitem{johnson60}
S. M. Johnson, {\em 
A linear diophantine problem}, 
Can. J. Math. {\bf 12} (1960), 390--398. 

\bibitem{ko03}  
T. Komatsu, {\em  
On the number of solutions of the Diophantine equation of Frobenius--General case}, 
Math. Commun. {\bf 8} (2003), 195--206. 

\bibitem{Ko22a} 
T. Komatsu, {\em 
The Frobenius number for sequences of triangular numbers associated with number of solutions},  
Ann. Comb. {\bf 26} (2022) 757--779. 
%DOI:10.1007/s00026-022-00594-3 

\bibitem{Ko22b} 
T. Komatsu, {\em 
The Frobenius number associated with the number of representations for sequences of repunits},  
C. R. Math. Acad. Sci. Paris {\bf 361} (2023), 73--89.   
https://doi.org/10.5802/crmath.394  

\bibitem{Ko22c}  
T. Komatsu, {\em 
Sylvester power and weighted sums on the Frobenius set in arithmetic progression}, 
Discrete Appl. Math. {\bf 315} (2022), 110--126. 

\bibitem{Ko23d} 
T. Komatsu, {\em  
On the determination of $p$-Frobenius and related numbers using the $p$-Ap\'ery set}, 
Rev. R. Acad. Cienc. Exactas F\'is. Nat. Ser. A Mat. RACSAM {\bf 118} (2024), no.2, Article 58, 17 p.   
DOI :10.1007/s13398-024-01556-5   

\bibitem{KP}  
T. Komatsu and C. Pita-Ruiz, {\em 
The Frobenius number for Jacobsthal triples associated with number of solutions}, Axioms {\bf 12} (2023), no.2, Article 98, 18 p.    
https://doi.org/10.3390/axioms12020098 

\bibitem{KLP}  
T. Komatsu, S. Laishram and P. Punyani,  {\em  
$p$-numerical semigroups of generalized Fibonacci triples}, 
Symmetry {\bf 15} (2023), no.4, Article 852, 13 p. 
https://doi.org/10.3390/sym15040852 

\bibitem{KY}  
T. Komatsu and H. Ying, {\em  
The $p$-Frobenius and $p$-Sylvester numbers for Fibonacci and Lucas triplets}, 
Math. Biosci. Eng. {\bf 20} (2023), No.2, 3455--3481. 
doi:10.3934/mbe.2023162

\bibitem{KY2}  
T. Komatsu and H. Ying, {\em  
$p$-numerical semigroups with $p$-symmetric properties}, 
J. Algebra Appl. (online ready).  
doi: 10.1142/S0219498824502165 
%arXiv:2207.08962  (2022). 

\bibitem{KZ0}  
T. Komatsu and Y. Zhang, {\em 
Weighted Sylvester sums on the Frobenius set}, 
Irish Math. Soc. Bull. {\bf 87} (2021), 21--29.    

\bibitem{KZ}  
T. Komatsu and Y. Zhang, {\em 
Weighted Sylvester sums on the Frobenius set in more variables}, 
Kyushu J. Math. {\bf 76} (2022), 163--175. 

\bibitem{Nari}  
H. Nari, {\em 
Symmetries on almost symmetric numerical semigroups}, 
Semigroup Forum {\bf 86} (2013), no.1, 140--154.

\bibitem{pu18}  
P. Punyani and A. Tripathi, {\em  
On changes in the Frobenius and Sylvester numbers},  
Integers {\bf 18B} (2018), \#A8, 12 p. 

\bibitem{ra05} 
J. L. Ram\'irez Alfons\'in,  {\em 
The Diophantine Frobenius Problem},  
Oxford University Press, Oxford, 2005.  

\bibitem{RR18}  
A. M. Robles-P\'erez and J. C. Rosales, {\em  
The Frobenius number for sequences of triangular and tetrahedral numbers},  
J. Number Theory {\bf 186} (2018), 473--492. 

\bibitem{RG09} 
J. C. Rosales and P. A. Garc\'ia-S\'anchez, {\em 
Numerical semigroups}, 
Developments in Mathematics, 20. Springer, New York, 2009.  
 
\bibitem{se77}  
E. S. Selmer, {\em  
On the linear diophantine problem of Frobenius},  
J. Reine Angew. Math. {\bf 293/294} (1977), 1--17.  

\bibitem{sy1857} 
J. J. Sylvester, {\em 
On the partition of numbers}, 
Quart. J. Pure Appl. Math. {\bf 1} (1857), 141--152.

\bibitem{tr00}  
A. Tripathi, {\em 
The number of solutions to $a x+b y=n$}, 
Fibonacci Quart. {\bf 38} (2000), 290--293.

\bibitem{tr08}  
A. Tripathi, {\em 
On sums of positive integers that are not of the form $a x+b y$}, 
Amer. Math. Monthly {\bf 115} (2008), 363--364.  

\bibitem{tr10}  
A. Tripathi, {\em 
On the Frobenius problem for $\{a^k,a^k+1,a^k+a,\dots,a^k+a^{k-1}\}$}, 
Integers {\bf 10} (2010), 523--529.  

\bibitem{wata73}
K. Watanabe, {\em 
Some examples of one dimensional Gorenstein domains}, 
Nagoya Math. J. {\bf 49} (1973), 101--109. 

\end{thebibliography}
\end{document}